\documentclass[12pt,oneside]{amsart}
\usepackage{graphicx, amssymb,amscd, verbatim}
\usepackage{euscript}
\usepackage{color}

\textwidth=15.00cm
\hoffset=-1.1cm

      \theoremstyle{plain}
      \newtheorem{theorem}{Theorem}[section]
      
      \newtheorem{lemma}[theorem]{Lemma}
      \newtheorem{corollary}[theorem]{Corollary}
      \newtheorem{proposition}[theorem]{Proposition}
      \newtheorem{remark}[theorem]{Remark}

\numberwithin{equation}{section}

      \makeatletter
      \def\@setcopyright{}
      \def\serieslogo@{}
      \makeatother

\def\h{\Phi}

\def\th{\tilde h}

\def\H{\mathcal H}
\def\P{\mathcal H}
\def\D{\mathcal D}
\def\w{\mathcal W}
\def\E{\mathcal E}

\def\M{\mathcal{M}}
\def\E{\mathcal{E}}

\def\p{\mathcal {P}}

\def\R{\mathbb R}
\def\Q{\mathbb Q}
\def\C{\mathbb C}
\def\Z{\mathbb Z}
\def\N{\mathbb N}
\def\T{\mathbb T}
\def\dist{\text{dist}}

\def\Id{\text{Id}}

\def\e{\varepsilon}
\def\a{c}
\def\Ci{C^\infty}
\def\Cr{C^r}

\def\f{\phi}

\begin{document}

\date{\today}
\author{Andrey Gogolev$^\ast$, Boris Kalinin$^{\ast\ast}$, Victoria Sadovskaya$^{\ast\ast\ast}$}

 \address{Department of Mathematics, The Ohio State University,  Columbus, OH 43210, USA}
\email{gogolyev.1@osu.edu}

\address{Department of Mathematics, The Pennsylvania State University, 
University Park, PA 16802, USA}
\email{kalinin@psu.edu, sadovskaya@psu.edu}

\title[Center foliation rigidity]{Center foliation rigidity for partially hyperbolic toral diffeomorphisms}

\thanks{$^\ast$ Supported in part by NSF grant DMS-1823150}
\thanks{$^{\ast\ast}$  Supported in part by Simons Foundation grant 426243}
\thanks{$^{\ast\ast\ast}$ Supported in part by NSF grant DMS-1764216}

\begin{abstract}
We study perturbations of a partially hyperbolic toral automorphism $L$ which is diagonalizable 
over $\C$ and has a dense center foliation. For a small perturbation of $L$ with a smooth center 
foliation we establish existence of a smooth leaf conjugacy to $L$. We also show that if a 
small perturbation of an ergodic irreducible $L$ has smooth center foliation and is 
bi-H\"older conjugate to $L$, then the conjugacy is smooth. As a corollary, we show that for 
any symplectic perturbation of such an $L$ any bi-H\"older conjugacy must be smooth.
 For a totally irreducible $L$ with two-dimensional center, we establish a number of equivalent 
 conditions on  the perturbation that ensure smooth conjugacy to $L$.
 
\end{abstract}

\maketitle


\section{Introduction and statements of results}

Partially hyperbolic  ergodic toral automorphisms, which are sometimes called quasi-hyperbolic, 
form an important class of algebraic partially hyperbolic systems. They have been
extensively studied and shown to have strong stochastic and other properties,
often similar to those of hyperbolic systems: Bernoulli property \cite{Kz}, 
uniqueness of the measure of maximal entropy \cite{B}, exponential mixing \cite{L},
density of periodic measures \cite{M} and their asymptotic equidistribution \cite{L}, and
cohomological properties similar to Liv\v{s}ic periodic point theorem and measurable Liv\v{s}ic theorem \cite{V}.

Perturbations of partially hyperbolic  ergodic toral automorphisms give a natural class
of partially hyperbolic systems. In contrast to linear models,  the properties
of such perturbations are much less understood. Some of the difficulties presented by these 
nonlinear systems are due to multidimensional non-compact center leaves. 
 For totally irreducible ergodic toral automorphisms
with two-dimensional center foliation, stable ergodicity was established by Rodriguez Hertz in \cite{RH}. 
Further properties for this case, including the stable Bernoulli property for symplectic perturbations, 
were obtained by Avila and Viana in \cite{AV}.  

In this paper we study rigidity properties for perturbations of partially hyperbolic toral automorphisms
related to the smoothness of their center foliation. In particular, we obtain smoothness of the
leaf conjugacy to the linear system, and smoothness of the conjugacy when one exists. Our main results
hold for systems with dense center foliation of any dimension, but have no analogs in the hyperbolic case. 
Further results are then deduced for systems with two-dimensional center foliation using \cite{RH,AV}. 

\vskip.1cm
We consider a linear map  $L\in SL(d,\Z)$ and use the same notation for the corresponding
 toral automorphism $L:\T^d\to\T^d$. The map $L$ is called {\em irreducible}\, if it has 
no rational invariant subspaces, or equivalently if its characteristic polynomial 
is irreducible over $\Q$. The automorphism $L$ is ergodic with respect to the Lebesgue measure 
if and only if no root of unity is its eigenvalue. 
 We define the stable, unstable, and center subspaces $E^s$, $E^u$, and $E^c$ for $L$
 as those corresponding to eigenvalues of modulus less than 1, greater than 1, 
 and equal to 1, respectively.  
 We denote  by $W^s$, $W^u$, and $W^c$ the corresponding 
 linear foliations. An irreducible ergodic automorphism $L$ is always {\em partially hyperbolic},
 that is, it has non-trivial $E^s$ and  $E^u$. 
 We will consider  partially hyperbolic automorphisms  $L$ with non-trivial center $E^c$.

We consider a $C^\infty$ diffeomorphism   $f$ which is 
$C^{1}$ close to $L$. Such $f$ is {\em partially hyperbolic}, more precisely,
there exist a nontrivial $Df$-invariant splitting $\E^s\oplus \E^c \oplus \E^u$ of the tangent 
bundle of $\T^d$, a continuous Riemannian metric on $\T^d$, and constants 
$\nu<1,\,$ $\hat\nu >1,\,$  $\gamma,$ $\hat\gamma\,$ such that for any $x \in \M$ 
and any unit vectors  
$\,v^s\in \E^{s}(x)$, $\,v^c\in \E^{c}(x)$, and $\,v^u\in \E^{u}(x)$,
$$
\|D_xf(v^s)\| < \nu <\gamma <\|D_xf(v^c)\| < \hat\gamma <
\hat\nu <\|D_xf(v^u)\|.
$$
 The sub-bundles $\E^{s}$, $\E^{u}$, and $\E^{c}$ are called {\em stable, unstable, and center.}
The  stable and unstable sub-bundles are tangent to the stable and unstable
foliations $\w^{s}$ and $\w^{u}$, respectively. The leaves of these  foliations are
$C^\infty$.  
By structural stability of partially hyperbolic systems \cite{HPS}, $f$ is {\em dynamically 
coherent}, that is, the bundles $\E^{c}$, $\E^{cu}=\E^u\oplus \E^c$, and $\E^{cs}=\E^s\oplus \E^c$ 
are tangent to foliations $\w^{c}$, $\w^{cu}$, and $\w^{cs}$ with $C^{r}$ leaves, where $r>1$ is 
determined by expansion/contraction in $\E^c$ relative to the rates for $\E^u$ and $\E^s$. 
Moreover, $f$ is leaf conjugate to $L$ 
by a bi-H\"older homeomorphism $h$ close to the identity. {\em A leaf conjugacy} is a homeomorphism 
$h\colon\T^d\to\T^d$ mapping the  leaves of $\w^c$ homeomorphically to the  leaves of $W^c$ such that  
$$
   h(f(\w^c(x)))=W^c(L(h(x))) \quad\text{for every }x\in \T^d.
$$

Now we formulate our main results. First we establish existence of a smooth leaf conjugacy 
for a perturbation with a smooth center foliation.

 \begin{theorem} [Smooth leaf conjugacy] \label{localL} 
 Let $L:\T^d\to\T^d$ be a partially hyperbolic automorphism which is diagonalizable 
 over $\C$ and has dense center foliation $W^c$. 
 Let $f:\T^d\to\T^d$ be a $C^{\infty}$ diffeomorphism which is $C^1$ close to $L$. 
 If $\w^c$ is a $C^{\infty}$ foliation, then $f$ is $C^\infty$ leaf-conjugate to $L$.
 \end{theorem}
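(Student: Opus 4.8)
\smallskip
The plan is to reduce the theorem to a single assertion about the foliation $\w^c$ alone — that it is $C^\infty$-conjugate to the linear foliation $W^c$ — and then to prove that assertion using the smoothness of $\w^c$ together with the hyperbolicity of the dynamics transverse to the center.

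\emph{Step 1: reduction to a smooth straightening.} First I would show it suffices to produce a $C^\infty$ diffeomorphism $\psi\colon\T^d\to\T^d$, homotopic to the identity, with $\psi(\w^c(x))=W^c(\psi(x))$ for all $x$; that is, $\psi$ conjugates $\w^c$ to $W^c$. Granting this, set $f_1=\psi f\psi^{-1}$; since $\w^c$ is $f$-invariant, $f_1$ preserves $W^c$. Write $\R^d=E^c\oplus E^{su}$ with $E^{su}=E^s\oplus E^u$ and let $\pi\colon\R^d\to\R^d/E^c$ be the linear quotient; $L$ induces on $\R^d/E^c$ an automorphism $\bar L$ which is hyperbolic precisely because $E^c$ is exactly the center of $L$. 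Lifting $f_1$ to $\R^d$ and projecting by $\pi$ (possible since $f_1$ preserves the affine $E^c$-coset foliation) one gets a continuous $\bar f_1$ on $\R^d/E^c$, and since $f_1\simeq L$ the difference $\bar f_1-\bar L$ is invariant under translation by $\pi(n)$ for every $n\in\Z^d$. But density of $W^c$ says precisely that $\{\pi(n):n\in\Z^d\}$ is dense in $\R^d/E^c$, so $\bar f_1-\bar L$ is a constant $c$; replacing $\psi$ by $t_v\circ\psi$ for a suitable torus translation $t_v$ (which preserves $W^c$, is $C^\infty$, and is homotopic to the identity) changes $c$ by $(\mathrm{Id}-\bar L)\pi(v)$, and since $\bar L$ has no eigenvalue $1$ we may arrange $c=0$. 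Then $\bar f_1=\bar L$, so $f_1$ permutes the leaves of $W^c$ exactly as $L$ does, i.e.\ the identity is a leaf conjugacy between $f_1$ and $L$, and hence $\psi$ is a $C^\infty$ leaf conjugacy between $f$ and $L$.

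\emph{Step 2: the straightening.} Structural stability provides a bi-H\"older leaf conjugacy $h\simeq\mathrm{id}$; lift it to $H\colon\R^d\to\R^d$ at bounded distance from the identity and put $\beta:=\pi\circ H\colon\R^d\to\R^d/E^c$. Since $h$ sends every center leaf into an $E^c$-coset, $\beta$ is constant along the leaves of $\w^c$ and separates them; since $h$ is a leaf conjugacy, $\beta\circ F=\bar L\circ\beta$ for the lift $F$ of $f$; and $\beta$ is $\Z^d$-equivariant and $C^0$-close to $\pi$. The goal is to show $\beta$ is $C^\infty$. It is trivially $C^\infty$ along $\w^c$. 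Along a leaf of the $C^\infty$ foliation $\w^s$, $\beta$ is a homeomorphism onto a stable leaf of $\bar L$ conjugating the $C^\infty$ contraction $f|_{\w^s}$ to the linear contraction $\bar L|_{E^s}$; here one exploits that $L$ is diagonalizable over $\C$ — so $L|_{E^c}$ is an isometry, the center rates of $f$ are squeezed close to $1$, and the transverse dynamics is genuinely hyperbolic with a clean spectral gap — to linearize these contractions and conclude $\beta$ is $C^\infty$ along $\w^s$, and symmetrically along $\w^u$. A Journ\'e-type regularity argument (using the center-stable and center-unstable foliations of $f$ as bridges, since $\w^s$ and $\w^u$ are not jointly integrable) then yields that $\beta$ is $C^\infty$ on $\R^d$. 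Finally, in the coordinates $\R^d=E^c\oplus E^{su}$ the map $x\mapsto(\mathrm{pr}_{E^c}x,\ \beta(x))$ is a $\Z^d$-equivariant $C^\infty$ diffeomorphism (injectivity and smoothness of the inverse follow since $\beta$ separates the center leaves and is transverse to them), and it descends to the required $\psi$ with $\psi(\w^c)=W^c$; in fact, since this $\psi$ has transverse part $\beta$, the constant $c$ of Step~1 vanishes automatically.

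\emph{The main obstacle} is Step 2, and specifically the regularity of $\beta$ transverse to the center: the smooth linearization of the non-stationary hyperbolic data of $f$ onto the constant hyperbolic data of $\bar L$, and the compatibility of that with a Journ\'e-type argument run on non-compact center leaves and pushed down $\Z^d$-equivariantly through $\R^d/E^c$ (with the further subtlety that the center-(un)stable foliations have only finite a priori leaf regularity, which one must again circumvent using that $L|_{E^c}$ is isometric). One must also be careful with the choice of lifts, of the fixed point of $f$, and of the translation normalization in Step 1. The two hypotheses carry the essential weight: ``$L$ diagonalizable over $\C$'' makes $L|_{E^c}$ an isometry, which keeps the center leaves of $f$ at bounded distance from $E^c$-cosets, makes the induced transverse dynamics hyperbolic, and improves the regularity of the center-(un)stable foliations; ``dense center foliation'' is exactly what collapses $\bar f_1-\bar L$ to a constant in Step 1 and singles out $W^c$ as the only linear model.
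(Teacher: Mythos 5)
Your Step 1 (reducing the theorem to producing a $C^\infty$ straightening $\psi$ of $\w^c$ onto $W^c$, then using density of $\pi(\Z^d)$ in $\R^d/E^c$ and invertibility of $\mathrm{Id}-\bar L$ to normalize the induced quotient map to $\bar L$) is a correct and sensible repackaging; the paper achieves the same effect by choosing the HPS leaf conjugacy with trivial center component and reducing everything to the regularity of its transverse (stable/unstable) components. The genuine gap is in Step 2, precisely at the sentence where you claim that $\beta$ restricted to a stable leaf is $C^\infty$ because it is ``a homeomorphism \dots conjugating the $C^\infty$ contraction $f|_{\w^s}$ to the linear contraction $\bar L|_{E^s}$,'' which you propose to conclude by linearization. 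A (non-stationary) topological conjugacy between a smooth contraction and a linear one carries no regularity beyond continuity: the centralizer of a linear contraction inside the homeomorphism group of $E^s$ contains many non-smooth elements (already for $A=\lambda\,\Id$, any map homogeneous of degree $1$ commutes with $A$ but is not differentiable at $0$ unless linear), so one can post-compose any smooth linearization with such an element and destroy smoothness without violating any of the identities $\beta\circ F=\bar L\circ\beta$ that you have available. Sternberg-type theorems give \emph{existence} of a smooth linearizing chart, not smoothness of a given continuous one. A tell-tale symptom is that your Step 2, as written, never uses the hypothesis that $\w^c$ is a $C^\infty$ foliation — yet without that hypothesis the theorem is false — and uses density of $W^c$ only in Step 1.

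What is missing is the mechanism by which the smoothness of $\w^c$ and the density of its leaves constrain $\beta$ \emph{transversally}. The paper's route: (i) put $f|_{\w^s}$ into non-stationary normal form (sub-resonance polynomial coordinates $\h_x$, a finite-dimensional Lie group $\p_{L_s}$); (ii) prove that every center holonomy $\H_{x,y}$ between stable leaves — which is $C^\infty$ exactly because $\w^c$ is $C^\infty$ — preserves these normal forms; this step is itself delicate because $f$ and the holonomies only commute modulo sliding along the center, which forces the suspension-flow construction and the auxiliary extensions $F^t$, $G$ before the centralizer part of the normal form theorem can be invoked; (iii) use density of $\w^c(x)$ to approximate any $y\in\w^s(x)$ by points $y_n\in\w^c(x)$, so that the translation action of $E^s$ on $W^s(h(x))$ is realized, after conjugation by $h$, as a $C^0$-limit of holonomies and hence as a continuous homomorphism $\eta_x:E^s\to\bar\p_x$ into the finite-dimensional Lie group generated by $\p_{L_s}$ and translations; (iv) conclude by automatic smoothness of continuous homomorphisms of Lie groups. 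Some such rigidity input is indispensable; replacing it by ``conjugacy of contractions with a spectral gap'' does not close the argument.
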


We note that the theorem applies, in particular, to all irreducible partially hyperbolic automorphisms.
Also, if the theorem applies to automorphisms $L_1$ and $L_2$, then it applies to $L_1 \times L_2$ 
and $L_1 \times \Id_{\T^k}$ as well.

\begin{remark} 
Theorem \ref{localL} has a finite regularity version: if $\w^c$ is a $C^{r}$ foliation
 with $r>r(L)$ from \eqref{r(L)} below, then $f$ is $C^{q}$ leaf-conjugate to $L$
 where $q=r$ if $r\notin \N$ and $q=r-\e$ for any $\e>0$  if $r \in \N$.
 This can be obtained by the same argument using $C^{r}$ normal form coordinates 
 and Journ\'e's lemma \cite{J}. 
 \end{remark}

 \vskip.1cm
Next we consider the case when $f$ is bi-H\"older conjugate to $L$. That is, we assume that there exists a H\"older continuous conjugacy $h$ with a H\"older continuous inverse. We obtain $C^\infty$ smoothness 
of this conjugacy if $\w^c$ has sufficient regularity defined as follows.
Let $1<\rho^u_{\min}\le \rho^u_{\max}$ be the smallest and largest moduli of unstable eigenvalues of $L$,
and let $0<\rho^s_{\min}\le \rho^s_{\max}<1$ be the smallest and largest moduli of its stable eigenvalues. 
We set  
\begin{equation}\label{r(L)}
\begin{aligned}
& r^u(L)= (\log \rho^u_{\max}) / (\log {\rho^u_{\min}})\ge 1, \\
& r^s(L)= (\log \rho^s_{\min}) / (\log {\rho^s_{\max}})\ge 1,\\
 & r(L)=\max \,\{r^u(L), \,r^s(L)\}.
 \end{aligned}
\end{equation}

  \begin{theorem}  [Smoothness of bi-H\"older conjugacy] \label{localG} 
  Let $L:\T^d\to\T^d$ be an irreducible ergodic automorphism  and let $r>r(L)$.  Let $f:\T^d\to\T^d$ be a volume-preserving 
$C^{\infty}$ diffeomorphism that is sufficiently $C^1$ close to $L$. If $f$ has $C^r$ center foliation and is conjugate to $L$ by a  bi-H\"older homeomorphism $h$,
then $h$ is $C^\infty$.
 \end{theorem}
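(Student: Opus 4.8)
The plan is to show that $h$ is $C^\infty$ along each of the three $f$-invariant foliations $\w^s$, $\w^u$, $\w^c$, with uniform estimates on the leaves, and then to promote this to $h\in C^\infty(\T^d)$ by Journ\'e's lemma \cite{J}, applied in two stages: first inside the leaves of $\w^{cu}$, where $\w^c$ and $\w^u$ are transverse and complementary, to get smoothness along $\w^{cu}$, and then combining $\w^{cu}$ with $\w^s$ on $\T^d$. Since $h$ maps the leaves of $\w^\sigma$ to those of $W^\sigma$ and conjugates $f$ to $L$, on each leaf it conjugates the relevant restriction of $f$ to the affine restriction of $L$, so the question is leafwise; also, once $h\in C^\infty(\T^d)$ it follows automatically that $\w^c=h^{-1}(W^c)$ is $C^\infty$, so the theorem is intrinsically a rigidity statement in which the only input from $\w^c$ is its $C^r$ regularity with $r>r(L)$.

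\emph{The hyperbolic directions.} Since $f$ is $C^1$-close to $L$ and $L$ is diagonalizable over $\C$, the derivative cocycle of $f$ along $\w^u$ is close to the constant cocycle $L|_{E^u}$, and $f$ admits a nonstationary normal form along $\w^u$, i.e.\ a family of coordinates in which $f$ acts by polynomial maps; I would use that the inequality $r>r^u(L)$ from \eqref{r(L)}, together with the $C^r$ regularity of $\w^c$, forces these normal forms to be linear — this is the interplay between the regularity of the center foliation and the resonances of $L$ that also underlies Theorem \ref{localL}. The normal forms for $L$ are linear as well, so, writing $h$ in normal-form coordinates along $\w^u$, the conjugacy equation together with the transitivity of $L$ and the matching of data over periodic orbits should force $h$ to be affine on each unstable leaf, whence $h$ is $C^\infty$ along $\w^u$, and likewise along $\w^s$ using $r>r^s(L)$. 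The volume-preserving hypothesis enters precisely here: a merely bi-H\"older $h$ cannot be differentiated at periodic points, so the matching of the periodic data — in particular of the stable and unstable Jacobians — must be obtained instead from an absolute-continuity property of $h$ along the hyperbolic leaves and the measurable Liv\v{s}ic theorem for the Jacobian cocycles.

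\emph{The center direction.} Here the center foliation is used directly. By Theorem \ref{localL} in its finite-regularity form (valid since $r>r(L)$), $f$ is $C^q$ leaf-conjugate to $L$ by some $\Phi$ with $q$ close to $r$. Lifting to the universal cover and using that both $h$ and $\Phi$ are at bounded distance from the identity while the leaves of $W^c$ are parallel affine subspaces, one checks that $h$ and $\Phi$ send each lifted $\w^c$-leaf to the \emph{same} lifted $W^c$-leaf; this reduces the smoothness of $h$ along $\w^c$ to a nonstationary linearization of $f$ along $\w^c$ with target the rotation cocycle $L|_{E^c}$. Now the eigenvalues of $L|_{E^c}$, being eigenvalues of the irreducible integer matrix $L$ that lie on the unit circle, are algebraic and hence Diophantine; the cocycle is $C^\infty$ and close to $L|_{E^c}$; and the cohomological obstructions to a smooth linearization vanish because a continuous one already exists, namely the one coming from $h$. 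So $f$ has a $C^\infty$ nonstationary linearization along $\w^c$, and in those coordinates $h$ conjugates $L|_{E^c}$ to itself along center leaves, so it is governed by the centralizer of $L|_{E^c}$ and is $C^\infty$ along $\w^c$. Combining with the previous step via Journ\'e's lemma — and, where the a priori leaf regularities are only finite, a bootstrapping of the conjugacy equation using hyperbolicity along $\w^{s,u}$ and the Diophantine property along $\w^c$ — then yields $h\in C^\infty(\T^d)$.

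\emph{The main obstacle.} The stable and unstable analysis follows the classical Anosov template; the genuinely new point, with no analog in the hyperbolic case, is the center. There the dynamics is neutral rather than hyperbolic, the center leaves are non-compact, the a priori regularity supplied by the leaf conjugacy is only finite, and the linearization along $\w^c$ is an elliptic (small-divisor) problem. The crux will be to show that the resonance-and-Diophantine threshold encoded in $r(L)$ in \eqref{r(L)} simultaneously makes the normal forms along $\w^s$ and $\w^u$ trivial and the elliptic linearization along $\w^c$ available, and to exploit the already-given continuous conjugacy $h$ to annihilate the cohomological obstructions that would otherwise block smoothness along the center.
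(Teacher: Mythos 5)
Your overall Journ\'e-based skeleton is reasonable, but both of your key mechanisms differ from what actually works, and each has a genuine gap. For the hyperbolic directions, your claim that $r>r^u(L)$ together with the $C^r$ regularity of $\w^c$ ``forces the normal forms to be linear'' is false: the condition $r>r(L)\ge d(L|_{E^s})$ only guarantees that the center holonomies are regular enough to fall under the centralizer part of the normal form theorem, so that in normal form coordinates they act by (generally nonlinear) sub-resonance polynomials. The actual argument is: since $\w^c$ is $C^r$, the center holonomies $\H_{x,y}$ are $C^r$ and, after passing to the suspension, commute with a time-$1/\theta$ map, hence preserve the normal forms on $\w^s$; by irreducibility the center leaves are dense, so the conjugates $h^{-1}\circ H_v\circ h$ of \emph{all} translations $v\in E^s$ of $W^s(h(x))$ are $C^0$ limits of such holonomies and therefore lie in the finite-dimensional Lie group $\bar\p_x$ generated by sub-resonance polynomials and translations. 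One then invokes automatic smoothness of the continuous homomorphism $E^s\to\bar\p_x$. Your substitute --- matching periodic data via absolute continuity and the measurable Liv\v{s}ic theorem for Jacobians --- does not yield leafwise smoothness when $E^s$ is higher-dimensional and non-conformal, and it is not where the volume-preserving hypothesis enters; that hypothesis is used to show $h_*\mu=m$ (via entropy and uniqueness of the measure of maximal entropy), which is needed later.

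For the center direction your proposal breaks down more seriously. There is no small-divisor/Diophantine linearization problem to solve along $\w^c$: the center leaves are non-compact and dense, $L|_{E^c}$ acts on a linear space rather than a torus, and no Fourier-analytic framework is available on the leaves. The correct route is to write $h=\Id+H$ and split $H$ into components $H_s,H_u,H_c$ along $E^s\oplus E^c\oplus E^u$. The components $H_u,H_s$ are uniformly convergent geometric series (e.g.\ $H_u=\sum_k L_u^{-k}(G_u\circ f^k)$), which can be differentiated term by term along $\w^c$ because $\|Df^n|_{\E^c}\|$ grows subexponentially (zero center exponents follow from the bi-H\"older conjugacy); Journ\'e then gives $H_s,H_u\in C^\infty$. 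The center component satisfies the same series identity only distributionally, and its smoothness is obtained by pairing against H\"older test functions, using exponential mixing of $(L,m)$ --- transported to $(f,\mu)$ via $h_*\mu=m$ --- to control the distributional derivatives along $\w^c$, and then applying the elliptic regularity result of Fisher--Kalinin--Spatzier. None of these ingredients (the series decomposition, the measure identification, exponential mixing, the distributional regularity theorem) appears in your outline, and the cohomological/KAM scheme you propose in their place does not have a viable formulation in this setting.
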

 
 \begin{remark} 
If $E^u$ and $E^s$ are one-dimensional it suffices to take $r=1$ rather than $r>r(L)=1$, that is,
to assume that the center foliation is $C^1$. Indeed, for one-dimensional leaves the analog of
the centralizer part of Theorems \ref{NFext} and \ref{MainNF} was proved in \cite{KL} in $C^1$ regularity.
 \end{remark}

For a symplectic perturbation $f$ we obtain the following corollary.
 
 \begin{corollary} \label{symplecticG}
 Let $L:\T^d\to\T^d$ be a symplectic irreducible ergodic automorphism and let $f:\T^d\to\T^d$ be a $C^{\infty}$ symplectic diffeomorphism 
 which is $C^{1}$-close to $L$. If $f$ is bi-H\"older conjugate to $L$, then $f$ is $C^{\infty}$  conjugate to $L$.
  \end{corollary}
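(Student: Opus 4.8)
The plan is to deduce Corollary~\ref{symplecticG} from Theorem~\ref{localG}. Let $\omega$ denote the standard translation-invariant symplectic form on $\T^d$; then $d$ is even and, $L$ being symplectic, $L\in Sp(d,\Z)$, and since $L$ is irreducible and ergodic it is automatically diagonalizable over $\C$ and partially hyperbolic. To apply Theorem~\ref{localG} to the given bi-H\"older conjugacy $h$ we must verify two facts about $f$: (i) $f$ preserves Lebesgue measure, and (ii) the center foliation $\w^c$ of $f$ is $C^r$ for some $r>r(L)$. Once (i) and (ii) hold, Theorem~\ref{localG} gives $h\in C^\infty$, which is the assertion; note that the symplectic hypothesis on $f$ is used only in establishing (i) and (ii).

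Fact (i) is immediate. From $f^\ast\omega=\omega$ we get $f^\ast(\omega^{d/2})=(f^\ast\omega)^{d/2}=\omega^{d/2}$, where $\omega^{d/2}=\omega\wedge\cdots\wedge\omega$ is the top exterior power; since $\omega^{d/2}$ is a nonzero constant multiple of the Lebesgue volume form on $\T^d$, the map $f$ preserves Lebesgue measure.

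Fact (ii) is the heart of the matter, and the symplectic hypothesis is genuinely needed here: it does not follow from symplecticity alone, since by \cite{AV} a generic symplectic perturbation of $L$ (in the two-dimensional center case) has nonzero center Lyapunov exponents and is therefore not even topologically conjugate to $L$. The route I would take is to exploit the conjugacy to show that the center of $f$ remains as neutral as that of $L$, and then to invoke a regularity statement for center foliations of volume-preserving partially hyperbolic systems with trivial center behavior. Concretely: (a) the bi-H\"older conjugacy $h$ preserves measure-theoretic entropy, so it carries the volume of $f$ to an $L$-invariant measure of the same entropy; combined with Pesin's entropy formula for the volume-preserving map $f$ and the fact that Lebesgue is the unique measure of maximal entropy of $L$ (\cite{B}), this should severely constrain the Lyapunov data of $f$, the expected conclusion being --- using the symplectic pairing of center exponents --- that all center Lyapunov exponents of $f$ vanish; (b) a volume-preserving partially hyperbolic perturbation of $L$ with vanishing center exponents has sufficiently regular center holonomies, so that $\w^c$ is $C^r$ with $r>r(L)$; in the two-dimensional center case this is provided by \cite{RH,AV}, and in general it should follow from the normal-form machinery (Theorems~\ref{NFext} and~\ref{MainNF}) of this paper.

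I expect step (ii), and within it the passage from the merely bi-H\"older $h$ to genuine $C^r$ transverse regularity of $\w^c$ uniformly along center leaves and in arbitrary center dimension, to be the main obstacle. Once it is in place, the remaining steps --- the finite-regularity bookkeeping with $r(L)$ and Journ\'e's lemma already used in the proof of Theorem~\ref{localG} --- are routine, and Theorem~\ref{localG} finishes the proof.
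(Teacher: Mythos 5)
Your reduction to Theorem \ref{localG} and your fact (i) are fine, and your instinct that everything hinges on producing $C^r$ regularity of $\w^c$ with $r>r(L)$ is correct. But step (ii)(b) is a genuine gap: the implication ``volume-preserving perturbation with vanishing center exponents $\Rightarrow$ $\w^c$ is $C^r$'' is not true in general and is not supplied by the sources you point to. Center foliations of partially hyperbolic systems are typically only H\"older even when all center exponents vanish; this is precisely why Theorem \ref{local} of the paper still carries ``$C^r$ center foliation'' as a standing hypothesis even in the two-dimensional totally irreducible case, so \cite{RH,AV} cannot be handing you that regularity for free, and neither Theorem \ref{NFext} nor Theorem \ref{MainNF} produces transverse smoothness of $\w^c$ out of neutral center behavior. (Your entropy detour in (a) is also unnecessary: the vanishing of the center exponents follows directly from the bi-H\"older conjugacy, since a nonzero exponent would force exponential convergence or divergence of orbits inside a center leaf of $f$, which $h$ would transport to an impossible exponential behavior inside a center leaf of $L$.)

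The missing idea is that the symplectic form must be used a \emph{second} time, to identify $\E^c$ rather than to constrain exponents. Since $\omega$ is $f$-invariant and $\E^s$, and separately $\E^u$, is asymptotically contracted under forward iteration while $\E^c$ is subexponential, one gets $\omega_x(v,u)=\omega_{f^nx}(Df^n v,Df^n u)\to 0$ for $v\in\E^s\oplus\E^u$ and $u\in\E^c$; hence $\E^c$ is exactly the symplectic orthogonal of $\E^s\oplus\E^u$, and smoothness of $\E^c$ (in fact $C^\infty$, so the $r(L)$ bookkeeping disappears) reduces to smoothness of $\E^s\oplus\E^u$. The latter is where the topological conjugacy enters: $h^{-1}(W^{s+u})$ gives a continuous foliation $\w^{s+u}$ jointly integrating $\w^s$ and $\w^u$, its leaves are uniformly $C^\infty$ by \cite[Lemma 4.1]{KS06}, and its holonomies between center leaves are $C^\infty$ because the vanishing center exponents make $Df|_{\E^c}$ and $Df|_{\E^{cs}}$ subexponential, so $f$ is strongly $r$-bunched for every $r$, the leaves of $\w^{cs}$ are $C^\infty$ \cite{PSW}, and $\w^s$ is $C^\infty$ inside them by the $C^r$ section theorem \cite{HPS,KS07}. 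So the zero center exponents enter only as a bunching condition governing the regularity of $\w^s$ inside $\w^{cs}$, not as a direct source of regularity of the center foliation.
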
 

This result is a rare example of rigidity in smooth dynamics, in the sense of  ``weak equivalence
 implies strong equivalence", that holds for a single system rather than an action of higher rank group.
 It relies on coexistence of hyperbolic and elliptic behavior in one system, and thus is also a rare example
 of a result for partially hyperbolic systems that does not cover hyperbolic systems as a  particular case. 

\begin{remark} \label{localGdense}
Theorem \ref{localG} and Corollary \ref{symplecticG} hold 
more generally for any automorphism $L$ which is partially hyperbolic, ergodic, diagonalizable 
over $\C$ and has a dense center foliation. This class of automorphisms includes products of irreducible ergodic 
automorphisms. The proof is essentially the same utilizing Proposition~\ref{normal holonomy L}.
See Remark~\ref{normal holonomy G}.
 \end{remark}

\vskip.1cm

Now we consider the case of $L$ with two-dimensional center.
We call a toral automorphism $L$ {\em totally irreducible} if $L^n$ is irreducible for every 
$n \in \N$. Such an $L$ is always ergodic.  For a totally irreducible automorphism $L$ with 
exactly two eigenvalues of absolute value one, that is $\dim E^c=2$, Rodriguez Hertz proved
in  \cite{RH} that it is stably ergodic, more precisely, any sufficiently $C^{22}$-small 
volume-preserving  perturbation of $L$ is also ergodic.  For such $L$ we use some results from 
\cite{RH}  and \cite{AV} to obtain further corollaries of Theorem \ref{localG}.
\vskip.1cm

We recall definitions accessibility and Lyapunov exponents before stating further results.
A partially hyperbolic  diffeomorphism $f$ of $\T^d$  is called {\em accessible}  if any two points 
 in $\T^d$ can be connected by an $su$-path, that is, by a concatenation 
 of finitely many subpaths each lying  in a single  leaf of $\w^s$ or  $\w^u$. 
 
 Let $\mu$ be an ergodic $f$-invariant measure.
Then by Oseledets Multiplicative Ergodic Theorem \cite{O} \,
there exist numbers $\lambda_1 < \dots < \lambda_{m}$, called the {\em Lyapunov exponents}
of $f$ with respect to $\mu$, an $f$-invariant 
set $\Lambda$ with $\mu (\Lambda)=1$, and a $Df$-invariant Lyapunov splitting 
$
\R^d=T_x\T^d  =\E^{1}_x\oplus\dots\oplus \E^{m}_x$  for $\,x\in \Lambda$ such that 
$$ 
\underset{n\to{\pm \infty}}{\lim} n^{-1} \log\| D_xf^n (v) \|=  \lambda_i\, 
\,\text{  for any }i=1,\dots ,m \,\text{ and any }\,0 \not= v\in \E^{i}_x.
$$
Clearly, the Lyapunov splitting refines the partially hyperbolic one.

\vskip.1cm
 In the next theorem and corollary we set $N=5$ if $d>4$ and $N=22$ if $d=4$.

 \begin{theorem}  [Rigidity for two-dimensional center] \label{local} 
 Let $L:\T^d\to\T^d$ be a totally irreducible automorphism with exactly two eigenvalues 
 of absolute value one and let $r>r(L)$.  Let $f:\T^d\to\T^d$ be a volume-preserving 
$C^{\infty}$ diffeomorphism which is sufficiently $C^N$ close to $L$ and has $C^r$ center foliation.
 Then any of the following equivalent conditions implies that  $f$ is $C^{\infty}$ conjugate to $L$.
\vskip.1cm
\begin{itemize}
\item[(1)] Lyapunov exponents of $f$ with respect to the volume on $\E^c$ are all 0;

\item[(2)] Lyapunov exponents of $f$ with respect to the volume on $\E^c$ are equal;

\item[(3)] $f$ is not accessible;

\item[(4)] $f$ is topologically conjugate to $L$;

\item[(5)] $\w^s$ and $\w^u$ are jointly integrable, that is, there exists a continuous foliation
of dimension $\dim \w^s + \dim \w^u$ sub-foliated by $\w^s$ and $\w^u$.
\end{itemize}
\end{theorem}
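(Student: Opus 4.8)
The plan is to reduce the statement to Theorem~\ref{localG}. Only the last implication — that one of (1)--(5) forces $f$ to be $C^\infty$ conjugate to $L$ — requires the new machinery of this paper; the equivalence of (1)--(5) is a repackaging of the dichotomy for two-dimensional center bundles established by Rodriguez Hertz \cite{RH} (for $d=4$) and Avila--Viana \cite{AV} (used for $d>4$), which I would cite essentially as is, indicating the provenance of each implication. In this circle, $(1)\Rightarrow(2)$ is trivial and $(2)\Rightarrow(1)$ is the exponent half of the dichotomy of \cite{AV} (equal center exponents, via the invariance principle applied to the projectivized center derivative cocycle, must vanish). The equivalences $(1)\Leftrightarrow(3)\Leftrightarrow(5)$ combine the accessibility dichotomy of \cite{RH} with the invariance principle: if $f$ is accessible then the center exponents cannot both vanish, while if $f$ is not accessible its accessibility classes form an $f$-invariant foliation, forcing joint integrability of $\w^s$ and $\w^u$ and, through the invariance principle, vanishing of the center exponents. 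Finally $(4)\Rightarrow(5)$ is formal — a topological conjugacy transports the (topologically characterized) strong stable and unstable webs of $f$ onto $W^s$ and $W^u$ of $L$, which are jointly integrable — and $(5)\Rightarrow(4)$ is the \emph{rigid alternative} of \cite{RH,AV}: in the jointly integrable / non-accessible case $f$ is topologically, in fact bi-H\"older, conjugate to $L$.

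Granting the equivalences, it suffices to deduce smoothness from, say, condition~(4): let $h$ be a topological conjugacy, $h\circ f=L\circ h$. The point is that $h$ is bi-H\"older. One can either extract this from the rigid-alternative construction in \cite{RH,AV}, or argue directly: $h$ carries the strong stable and unstable foliations of $f$ onto the linear foliations $W^s$ and $W^u$, the restriction of $h$ to a strong stable (resp.\ unstable) leaf is H\"older because on such a leaf the dynamics of $f$ and of $L$ are H\"older-close uniform contractions (resp.\ expansions) conjugated by $h$, and one then globalizes along the local product structure of $\w^s$ and $\w^u$ by Journ\'e's lemma \cite{J}. Since $L$ is totally irreducible, hence irreducible and ergodic, since $f$ is volume preserving and — being $C^N$ close to $L$ with $N\ge5$ — sufficiently $C^1$ close to $L$, and since $f$ has $C^r$ center foliation with $r>r(L)$, Theorem~\ref{localG} applies to the bi-H\"older conjugacy $h$ and yields $h\in C^\infty$.

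The main obstacle is the one non-formal half of the equivalences, namely the rigid alternative: passing from non-accessibility of $f$ (equivalently, from vanishing center exponents, equivalently from joint integrability of $\w^s$ and $\w^u$) to a genuine bi-H\"older conjugacy with $L$. This is where the argument must lean on \cite{RH,AV}, and matching their hypotheses is precisely what dictates the $C^N$-closeness, with $N=22$ for $d=4$ taken from \cite{RH} and $N=5$ for $d>4$ from the argument available when $\dim E^s$ or $\dim E^u$ exceeds one. A secondary point of care is that the homeomorphism fed into Theorem~\ref{localG} must be bi-H\"older rather than merely continuous, which is handled either by the direct contraction-plus-Journ\'e argument above or by noting that the conjugacy produced in the rigid case already has this regularity.
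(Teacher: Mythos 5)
Your proposal follows essentially the same route as the paper: every condition is funneled into a bi-H\"older topological conjugacy with $L$, to which Theorem~\ref{localG} is then applied. Two remarks. First, the paper's logical organization is lighter than yours: it observes that a smooth conjugacy implies all of (1)--(5), so it suffices to prove the one-directional chain $(1)\Rightarrow(2)\Rightarrow(3)$ (the latter by the contrapositive of \cite[Theorem 8.1]{AV}: accessibility forces distinct center exponents), $(5)\Rightarrow(3)$ (accessibility classes are the leaves of $\w^{s+u}$), and $(3)\Rightarrow(4)$ with bi-H\"older conjugacy from \cite[Section 6]{RH}; the remaining equivalences, including your $(2)\Rightarrow(1)$ and $(4)\Rightarrow(5)$, then come for free by closing the circle through smooth conjugacy. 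Your attempt to prove $(2)\Rightarrow(1)$ directly from the invariance principle is shakier than needed: $f$ is only volume-preserving, so the center exponents need not sum to zero a priori, and this implication is most safely obtained a posteriori. Second, and more importantly, your ``direct'' argument for bi-H\"olderness of $h$ does not work as stated: Journ\'e-type globalization from regularity along $\w^s$ and $\w^u$ requires the two foliations to jointly span the tangent bundle, whereas here $\dim\w^s+\dim\w^u=d-2$, and H\"older control of $h$ in the center direction is precisely the nontrivial point. You must therefore take the bi-H\"older conjugacy from the rigid-alternative construction of \cite{RH} (cf.\ \cite[Remark 8.3]{AV}), as the paper does; since you offer that as your alternative, the proof goes through.
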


 \begin{corollary} \label{symplectic}
 Let $L$ be as in Theorem \ref{local} and symplectic, and let $f:\T^d\to\T^d$ be  
 a $C^{\infty}$ symplectic diffeomorphism  which is sufficiently $C^{N}$-close to $L$. 
 Then any of the following equivalent conditions implies that  $f$ is $C^{\infty}$ conjugate to $L$.
\vskip.1cm
\begin{itemize}
\item[(0)] $f$ has at least one zero Lyapunov exponent with respect to the volume;

\item[(1-5)] as in Theorem \ref{local};

\item[(6)] $\E^s \oplus \E^u$ is $C^1$.
\end{itemize}
 \end{corollary}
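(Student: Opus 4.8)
The plan is to derive the Corollary from Theorem~\ref{local}, supplying its hypothesis on the regularity of the center foliation from the dynamical assumptions, and using the dichotomy for symplectic perturbations of $L$ from \cite{RH,AV}. Note first that $L$, being as in Theorem~\ref{local}, is totally irreducible with $\dim E^c=2$, hence irreducible and ergodic; and $f$, being symplectic, is volume-preserving, $C^1$-close to $L$ (so partially hyperbolic and dynamically coherent), and, by \cite{RH}, ergodic with respect to the volume, so its Lyapunov exponents are well-defined numbers.

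\emph{Equivalence of (0), (1), (2).} Since $f$ is symplectic, its Lyapunov spectrum with respect to the volume is symmetric about $0$ with multiplicity; the center exponents are $O(\|f-L\|_{C^1})$ and hence small, so with $\dim\E^c=2$ the two center exponents must be $\lambda_c$ and $-\lambda_c$ for some $\lambda_c\ge 0$ (here the exponents with respect to the volume on $\E^c$ are these two numbers because $\E^c$ is a symplectic subbundle, being $C^1$-close to $E^c$, which is spanned by a conjugate pair of unit-modulus eigenvalues). As the stable exponents are negative and the unstable ones positive, a zero volume exponent can only lie in $\E^c$; thus $f$ has a zero volume exponent iff $\lambda_c=0$ iff both center exponents vanish iff they coincide, which is (0) $\Leftrightarrow$ (1) $\Leftrightarrow$ (2).

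\emph{The chain among (1)--(6).} If $f$ is topologically conjugate to $L$ by a homeomorphism $h$, then $h$ carries $\w^s$ and $\w^u$ onto $W^s$ and $W^u$ (stable and unstable leaves are characterized metrically), hence carries the linear foliation of $\T^d$ tangent to $E^s\oplus E^u$ onto a continuous foliation sub-foliated by $\w^s$ and $\w^u$: (4)$\Rightarrow$(5). If $\w^s$ and $\w^u$ are jointly integrable into a foliation $\w^{su}$, every $su$-path from a point remains in its $\w^{su}$-leaf, which has dimension $\dim\w^s+\dim\w^u<d$, so $f$ is not accessible: (5)$\Rightarrow$(3). If $\E^s\oplus\E^u$ is $C^1$, then, using its $f$-invariance, it is integrable and its integral foliation realizes (5): (6)$\Rightarrow$(5). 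Finally, by \cite{RH,AV}, a non-accessible perturbation in this class has vanishing center Lyapunov exponents: (3)$\Rightarrow$(1). Combined with the previous paragraph, this shows each of (2)--(6) implies (1); the converses will follow from the next paragraph, since once $f$ is $C^\infty$ conjugate to $L$ it inherits (0) and (2)--(6) from $L$ (for which $W^s$ and $W^u$ are jointly integrable into the linear $su$-foliation, $E^s\oplus E^u$ is smooth, and $L$ is trivially topologically conjugate to itself and not accessible).

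\emph{Conclusion and the main obstacle.} Assume (1): the center Lyapunov exponents of $f$ vanish. By the analysis of symplectic perturbations with two-dimensional center in \cite{AV}, this forces the center foliation $\w^c$ to be $C^\infty$, in particular $C^r$. Now all hypotheses of Theorem~\ref{local} hold and its condition (1) is satisfied, so $f$ is $C^\infty$ conjugate to $L$; since each of (0),(2)--(6) implies (1), each of them gives the same conclusion, and all seven conditions are equivalent. The crux of the argument is this last step: upgrading the center foliation to class $C^r$ from the bare vanishing of the center exponents, which is exactly where the measure-rigidity/invariance-principle input of \cite{AV} in the two-dimensional symplectic setting is indispensable. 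A secondary point needing care is (6)$\Rightarrow$(5): that a $C^1$ invariant distribution $\E^s\oplus\E^u$ is integrable is not a formal Frobenius statement, as $\E^s$ and $\E^u$ are individually only H\"older, and must be deduced from invariance under $f$.
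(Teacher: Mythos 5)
Your outline of the implications among (0)--(5) matches the paper's, but there are two genuine gaps, one of them at the crux of the argument. The main one is the step ``vanishing center exponents force $\w^c$ to be $C^\infty$ by the analysis in \cite{AV}.'' This is not a result of \cite{AV}: Theorem I there only yields, in the non--nonuniformly-hyperbolic case, a conjugacy to $L$ by a volume-preserving \emph{homeomorphism}, and the paper explicitly notes that smoothness was previously known only on $\T^4$. Supplying the $C^r$ regularity of $\w^c$ needed to invoke Theorem \ref{local} (equivalently Theorem \ref{localG}) is exactly the content of the paper's proof of Corollary \ref{symplecticG}: from (1)--(5) one first gets a bi-H\"older topological conjugacy via \cite{RH}; this gives topological joint integrability of $\w^s$ and $\w^u$ into a foliation $\w^{s+u}$ whose leaves are uniformly $C^\infty$; the holonomies of $\w^{s+u}$ between center leaves are then shown to be $C^\infty$ using sub-exponential growth of $Df|_{\E^c}$ (strong $r$-bunching) and the $C^r$ Section Theorem; hence $\E^s\oplus\E^u$ is $C^\infty$ and $\E^c$ is its symplectic orthogonal, so $\w^c$ is $C^\infty$. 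None of this can be outsourced to \cite{AV}, and without it your appeal to Theorem \ref{local} is circular, since that theorem assumes a $C^r$ center foliation.

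The second gap is (6)$\Rightarrow$(5). You assert that a $C^1$ $f$-invariant distribution $\E^s\oplus\E^u$ is integrable and flag this as ``needing care,'' but give no argument; in fact it is not true in general. The paper instead uses Hammerlindl's theorem \cite[Theorem 1.1]{H}: if $\E^s\oplus\E^u$ is $C^1$ and \emph{not} integrable, then some center exponent satisfies a resonance $\lambda_c^f=\lambda_i^f+\lambda_j^f$ with $\lambda_i^f$ stable and $\lambda_j^f$ unstable. Combining the symmetry of the symplectic Lyapunov spectrum with the fact that the center exponents of a $C^1$-small perturbation are smaller than the gaps in the spectrum of $L$, this resonance forces $\lambda_i^f+\lambda_j^f=0$, i.e., $\lambda_c^f=0$. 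So the correct conclusion is (6)$\Rightarrow$(5) \emph{or} (0), which still suffices, but via a different mechanism than the one you propose. Your treatment of (0)$\Leftrightarrow$(1)$\Leftrightarrow$(2) via symplectic symmetry and the chain (4)$\Rightarrow$(5)$\Rightarrow$(3)$\Rightarrow$(1) is consistent with the paper.
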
 
 
 \begin{remark}  \label{dichotomy}
 Thus for perturbations as in Corollary \ref{symplectic} we have a dichotomy:
 either  $f$ is non-uniformly hyperbolic or $f$ is smoothly conjugate to $L$.
 \end{remark}

\noindent This  strengthens the earlier result in the same setting 
\cite[Theorem I]{AV} which showed that either $f$ is non-uniformly hyperbolic
or $f$ is conjugate to $L$ via a volume-preserving homeomorphism. Smoothness 
of the conjugacy was only known for the case of $\T^4$~\cite{AV}.
 
\vskip.1cm

 Our results are somewhat similar to
some of the recent rigidity results for partially hyperbolic systems related to absolute
continuity of the center foliation 
\cite{AVW1, AVW2, SY, DX}. 
For example, our Theorem~\ref{localL} can be compared to a theorem of Avila-Viana-Wilkinson~\cite{AVW2} on $\T^3$. Namely, they consider volume preserving perturbations $f$ of a partially hyperbolic automorphism $L(x,y)=(Ax, y)$ of the 3-torus $\T^3$.  Then, by applying the invariance principle~\cite{AV}, they show that the center foliation is absolutely continuous if and only if it is smooth. Consequently, $f$ is smoothly conjugate to a diffeomorphism of the form $(x,y)\mapsto (g(x), y+\varphi(x))$.
This result was generalized to the case of higher dimensional compact center foliation by Damjanovic and Xu~\cite[Theorem 6]{DX}.

We note that papers~\cite{AVW1, AVW2, SY, DX} consider diffeomorphisms whose 
 center foliation either has compact leaves or  comes from the orbit foliation of a hyperbolic flow. 
Further, they also strongly rely on one-dimensionality of stable and unstable foliations (or a replacement assumption such as quasi-conformality or splitting into one-dimensional subbundles). 
 In contrast our methods treat all dimensions in a uniform way and primarily rely on denseness of center leaves and the theory of normal forms~\cite{GK, G, KS17, K19}.

\vskip.2cm

\noindent {\bf Structure of the paper.} In Section \ref{normal} we summarize results on normal forms that play an important part in our arguments.
Then we prove Theorem \ref{localG} in Section~\ref{PlocalG}. The existence of the conjugacy
in this case allows us to present one of the main arguments, smoothness along stable/unstable foliations
via normal forms and holonomies, in a simplified form. We deduce Corollary \ref{symplecticG}, Theorem \ref{local}, and 
Corollary \ref{symplectic} in Section \ref{Pcor}. In section Section \ref{PlocalL} 
we prove Theorem \ref{localL}, giving modifications needed to carry out the normal forms and holonomies arguments in the case of leaf conjugacy.

\vskip.2cm

\noindent {\bf Acknowledgments.} We would like to thank Federico Rodriguez Hertz and  Ralf Spatzier
for useful discussions.


 \section{Normal forms for contractions} \label{normal}

In this section we give preliminaries on non-stationary normal forms for contractions.
To make the presentation less technical, we formulate the results only for perturbations of linear maps.
This is sufficient for our purposes.

Let $f$ be a homeomorphism of a compact connected manifold (or a compact metric space) $\M$.
Let $\E=\M\times \R^k$ be a vector bundle and let $U\subset \E $ be a neighborhood of the zero section.
We will consider a $\Cr$ extension $F$ of $f$, that is, a map $F : U\to \E$ that projects to $f$, 
preserves the zero section, and
such that the corresponding fiber maps $F_x: U_x \to \E_{f(x)}$ are $\Cr$ and depend continuously
on $x$ in $\Cr$ topology. We will assume that the derivative of $F$ at the zero section is sufficiently 
$C^0$ close on $\M$ to a constant linear contraction, that is,  $D_0 F_x$ is close  uniformly in $x$
to a fixed linear map $A \in GL(k,\R)$  with $\| A\| <1$. 

For any such matrix $A$ there exists a finite dimensional Lie group $\p_A$ with respect 
to composition which consists of certain polynomial maps $P : \R^k \to \R^k$ with $P(0)=0$ 
and invertible derivative at $0$. The elements of $\p_A$ are so called {\em sub-resonance 
generated polynomials}.   This group is determined by the (ratios of) absolute values 
$\chi_1  <\dots < \chi _\ell<0$ of  eigenvalues of $A$ and by the corresponding invariant 
subspaces. The degrees of these polynomials are bounded above by $d(A) =\chi_1 / \chi_\ell$, 
which yields that this group is finite dimensional. 
A precise definition of $\p_A$ can be found in \cite{GK,G}, but it does not play a role in this paper.


The following theorem was established in \cite{GK,G} for $r \in \N \cup \{\infty\}$, 
 in \cite{KS17} for any $r$ in nonuniformly hyperbolic setting,
 and in \cite{K19} for this setting.
 
\begin{theorem}[Normal forms for contracting extensions]\label{NFext}$\;$
Let $A \in GL(k,\R)$ with $\| A\| <1$, let $\e>0$ and $r\in [d(A)+\e, \infty]$.
Let $F:U\to \E$ be a $\Cr$ extension of $f$ whose derivative at the zero section is 
sufficiently $C^0$-close to $A$. 

Then there exist  a neighborhood $V$ of the zero section   
and a  family  $\{ \h_x\}_{x\in \M}$ of $\Cr$ diffeomorphisms   
 $\h_x : V_{x} \to \E_x$,  satisfying $\h_x(0)=0$ and $D_0 \h_x =\Id \,$ and depending
 continuously on $x$ in the $\Cr$ topology,
 which conjugate $F$ to a  polynomial extension $P$, i.e., for all $ x\in \M,$
 \begin{equation}\label{NF}
\h_{f(x)} \circ F_x =P_x \circ \h_x, \; \text{ where }  \;  P_x\in \p_{A}.
\end{equation}

Moreover,  let  $g:\M\to \M$ be a homeomorphism commuting with $f$
and let $G:U\to \E$ be a $C^{d(A)+\e}$ extension of $g$ 
preserving the zero section and commuting with $F$. Then  for all $x\in \M$,
 \begin{equation}\label{cent} 
 \h_{g(x)} \circ G_x \circ \h_x^{-1} \in \p_A.
\end{equation}
\end{theorem}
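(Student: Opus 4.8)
The plan is to build the family $\{\h_x\}$ in two stages --- a polynomial pre-normalization conjugating $F$ to ``an element of $\p_A$ plus a tail vanishing to an order strictly above $d(A)$'', followed by a contraction-mapping argument removing the tail --- and then to deduce \eqref{cent} from the essential uniqueness of the normal form. Throughout let $\chi_1<\dots<\chi_\ell<0$ be the logarithms of the moduli of the eigenvalues of $A$, so $d(A)=\chi_1/\chi_\ell$, and recall that a monomial $v_{j_1}\cdots v_{j_m}$ occurring in the $i$-th component of a fiber map is transported, under the linearized dynamics, by the scalar cocycle with exponent $\chi_i-(\chi_{j_1}+\dots+\chi_{j_m})$; this vanishes exactly on the sub-resonance relations, is uniformly bounded away from $0$ on all others, and is strictly positive once $m>d(A)$. \emph{Stage 1 (formal normalization).} After, if necessary, conjugating by a continuous family of linear maps close to the identity we assume $A_x:=D_0F_x$ is already in the block-triangular form allowed for linear parts of elements of $\p_A$, so that all subsequent coordinate changes are tangent to the identity and $D_0\h_x=\Id$. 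Writing $F_x(v)=A_xv+\sum_{m=2}^{\mu}F_x^{(m)}(v)+o(|v|^{\mu})$ with $\mu=\lfloor d(A)+\e\rfloor\le r$, we remove, for $m=2,\dots,\mu$ in turn, the non-sub-resonance part of the degree-$m$ jet: conjugating by $v\mapsto v+\psi^{(m)}_x(v)$ with $\psi^{(m)}_x$ homogeneous of degree $m$ and matching degree-$m$ terms reduces this to the twisted cohomological equation
\[
\psi^{(m)}_{f(x)}\circ A_x-A_{f(x)}\circ\psi^{(m)}_x=-R_x
\]
on the bundle of degree-$m$ homogeneous maps, where $R_x$ is the non-sub-resonance component of $F^{(m)}_x$ plus explicit corrections from the previously chosen $\psi^{(j)}$, $j<m$. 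Since the relevant scalar cocycles are bounded away from $1$, this is solved by the appropriate forward or backward geometric series, yielding $\psi^{(m)}_x$ polynomial in $v$ and continuous in $x$ in $C^\infty$. As no sub-resonance monomial has degree $>d(A)$, composing these changes puts $F$ in the form $F_x=P^0_x+E_x$ with $P^0_x\in\p_A$ and $E_x$ a $\Cr$ extension whose $\mu$-jet vanishes; using the H\"older estimate on the $\mu$-th derivative, $E_x(v)=O(|v|^{d(A)+\e})$, an order strictly above $d(A)$.

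\emph{Stage 2 (removing the tail).} We now solve $\h_{f(x)}\circ F_x=P^0_x\circ\h_x$ for $\h_x=\Id+\eta_x$ with $\eta_x$ again of vanishing order above $d(A)$. This is a fixed-point problem $\eta=\mathcal T(\eta)$ on the Banach space of continuous families of $\Cr$ fiber maps with the prescribed high-order vanishing and small norm, where $\mathcal T$ is the graph-transform-type operator obtained by transporting the defect $E$ around the contracting fiber dynamics. The key estimate is that $\mathcal T$ contracts on a small enough neighborhood of the zero section: transporting an order-$\tau$ perturbation (with $\tau>d(A)$) one step by $P^0$ and its inverse multiplies it, in an adapted norm, by a factor comparable to $e^{\tau\chi_\ell-\chi_1}<1$ --- the inequality being precisely $\tau>d(A)=\chi_1/\chi_\ell$ --- together with fixed derivative factors from the $\Cr$-norm that are absorbed for $F$ close enough to $A$, the fractional part of $r$ being handled by the corresponding H\"older estimate on the top derivative. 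The unique fixed point $\h_x$ is $\Cr$, depends continuously on $x$ in the $\Cr$ topology (all data entering $\mathcal T$ does), satisfies $\h_x(0)=0$ automatically and $D_0\h_x=\Id$; composing with the Stage-1 change yields \eqref{NF}.

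\emph{Centralizer.} Put $\tilde G_x:=\h_{g(x)}\circ G_x\circ\h_x^{-1}$, a $C^{d(A)+\e}$ extension of $g$. Using $gf=fg$, the commutation $G_{f(x)}\circ F_x=F_{g(x)}\circ G_x$, and \eqref{NF}, a direct computation gives
\[
\tilde G_{f(x)}\circ P_x=P_{g(x)}\circ\tilde G_x.
\]
Expanding in Taylor jets: for each integer $m$ with $1\le m\le\lfloor d(A)\rfloor$, the non-sub-resonance part of the degree-$m$ term of $\tilde G_x$ satisfies a cohomological equation whose right-hand side is assembled from sub-resonance data of $P$ and from the lower-degree parts of $\tilde G_x$ (sub-resonance by induction), hence is itself sub-resonance and so has vanishing non-sub-resonance part; that part therefore obeys a strictly expanding scalar cocycle over the orbit and must vanish identically. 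Thus the $\lfloor d(A)\rfloor$-jet of $\tilde G_x$ coincides with that of a unique $Q_x\in\p_A$; since $P_x$ and $Q_x$ have degree $\le d(A)$, comparing $\lfloor d(A)\rfloor$-jets in the displayed relation shows $Q$ satisfies it as well, whence $S_x:=Q_x^{-1}\circ\tilde G_x$ satisfies $S_{f(x)}\circ P_x=P_x\circ S_x$ with $S_x=\Id$ to order $d(A)+\e$. Writing $S_x=(\mathcal P^n_x)^{-1}\circ S_{f^n(x)}\circ\mathcal P^n_x$ with $\mathcal P^n_x=P_{f^{n-1}(x)}\circ\dots\circ P_x$ and estimating, the discrepancy $\|S_x-\Id\|$ acquires the exponent $n\big(\chi_\ell(d(A)+\e)-\chi_1+o(1)\big)=n\big(\chi_\ell\e+o(1)\big)<0$ as $n\to\infty$, again because $\chi_\ell\, d(A)=\chi_1$; hence $S_x=\Id$ and $\tilde G_x=Q_x\in\p_A$, which is \eqref{cent}.

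\emph{Main obstacle.} The crux is Stage 2 in sharp (in particular non-integer) regularity: verifying that $\mathcal T$ contracts in the full $\Cr$-norm for all $r\in[d(A)+\e,\infty]$ --- balancing the gain $e^{\tau\chi_\ell-\chi_1}<1$ against the derivative losses up to order $r$ in an adapted norm and against the H\"older loss in the fractional part --- and that the fixed point varies continuously in $x$ in $\Cr$. The $C^\infty$ and integer cases are softer but still require convergence of the degree-by-degree scheme; by comparison, the algebra of Stage 1 and the cocycle-growth arguments in the centralizer step are routine once the sub-resonance formalism is in place.
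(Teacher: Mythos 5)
The paper does not actually prove Theorem \ref{NFext}: it is imported from \cite{GK,G,KS17,K19}, so there is no in-paper argument to compare against. Your two-stage outline --- a degree-by-degree polynomial pre-normalization killing non-sub-resonance jets via twisted cohomological equations solvable by geometric series, followed by a graph-transform/fixed-point removal of the tail using the key inequality $\tau\chi_\ell-\chi_1<0$ for $\tau>d(A)$, with the centralizer statement obtained by jet comparison plus the iteration estimate with exponent $n\chi_\ell\e<0$ --- is a faithful reconstruction of the standard proof in those references, and you correctly flag the sharp-regularity contraction estimate (non-integer $r$, continuity in $x$ in the $\Cr$ topology) as the genuine technical crux that those papers are devoted to.
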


\begin{remark} [Global version] \label{NFglob}
Suppose that $F :\E\to \E$ is a globally defined extension which satisfies the assumptions of 
Theorem \ref{NFext} and either contracts fibers
or, more generally, satisfies the property that for any compact set $K \subset \E$ and any 
neighborhood $V$ of the zero section we have $F^n(K) \subset V$ for all sufficiently large $n$.
Then the family $\{ \h_x\}_{x\in \M}$ can be uniquely extended   ``by invariance"
$\h_x  = (P_x^n)^{-1} \circ  \h_{f ^n (x)} \circ F_x^n$ to the family of global $\Cr$ 
diffeomorphisms $\h_x :  \E_{x} \to \E_{x}$ satisfying \eqref{NF}.
Moreover, if $G$ is another extension which commutes with $F$, then it satisfies \eqref{cent} globally.
 \end{remark}

\vskip.2cm

These results can be applied in the context of foliations as follows.
Let $f$ be a diffeomorphism of a  compact connected manifold  $\M$, and  let $\w$ 
be an $f$-invariant continuous foliation of $\M$ with uniformly $\Ci$ leaves. 
The latter means that all leaves are $\Ci$ submanifolds and all their derivatives 
are continuous on $\M$. 

Suppose that $f$ contracts $\w$ and that the derivative $Df|_{T\w}$, as a linear extension 
on $\E=T\w$, is close to a constant $A$. Restricting $f$ to the leaves of $\w$ and identifying 
locally $\w_x=\w(x)$ with $T_x \w$, we obtain a corresponding non-linear extension $F$ as in 
Theorem \ref{NFext} and hence a family $\{ \h_x\}_{x\in \M}$ of local normal form coordinates,
Then, as in Remark \ref{NFglob}, they can be extended to global 
diffeomorphisms $\h_x :  \w_x \to \E_{x}$ satisfying \eqref{NF}.
The important new statements in this setting describing dependence along the leaves, parts 
(2) and (3) in the next theorem, were established in \cite{KS16}.

\begin{theorem}[Normal forms for contracting foliations, \cite{KS16}]\label{MainNF} 
Let $f$ be a $\Ci$ diffeomorphism of a smooth compact connected manifold  $\M$,
and let $\w$ be an $f$-invariant topological foliation of $\M$ with uniformly $\Ci$ leaves. 
Suppose that $\w$ is contracted by $f$, 
and that the linear extension $Df |_{T\w}$ is close to a constant $A$ as in Theorem \ref{NFext}.
Then there exists a family $\{ \h_x \} _{x\in \M}$ of $C^\infty$ diffeomorphisms  
$\,\h_x: \w_x \to \E_x =T_x\w$ such that  for each $x \in \M$,
 $$
 P_x =\h_{f(x)} \circ f \circ \h_x ^{-1}:\E_{x} \to \E_{f{(x)}}\text{ is in  }\p_A. 
 $$
The family $\{ \h_x \} _{x\in \M}$ has the following properties:  

\begin{enumerate} 
\item $\h_x(x)=0$ and $D_x\h_x $ is 
the identity map  for each $x \in \M$; 

\item $\h_x$ depends continuously on $x \in \M$ in $C^\infty$ topology
 and smoothly on $x$ along the leaves of $\w$; 
 
 \item For any  $x \in \M$ and $y \in \w_x$, the map $\h_y \circ \h_x^{-1} : \E_x \to \E_y$ 
is a composition of a sub-resonance generated polynomial in $\p_A$ with a translation;
 
\item If $g$ is a homeomorphism of $\M$ which commutes with $f$,  preserves $\w$, 
 and is $C^{d(A)+\e}$  along  the leaves of $\w$, then 
 for each $x \in \M$  
  $$
  Q_x=\h_{f(x)} \circ g \circ \h_x ^{-1}:\E_{x} \to \E_{g{(x)}}  \text{ is in  }\p_A.
  $$
  \end{enumerate}
\end{theorem}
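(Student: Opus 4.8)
The plan is to read off most of the statement from Theorem~\ref{NFext} and its global version (Remark~\ref{NFglob}) applied leafwise, and then to isolate the two genuinely new assertions — leafwise smoothness in~(2) and the holonomy structure in~(3) — by a bootstrapping argument built on the non-stationary contraction estimates, following \cite{KS16}.

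First I would use the uniformly $\Ci$ leaves to identify, near each $x$, a piece of $\w_x$ with a neighborhood of $0$ in $T_x\w$, so that the restriction of $f$ to the leaves becomes a $\Ci$ nonlinear extension $F$ of $f$ on $\E=T\w$ whose derivative along the zero section is $Df|_{T\w}$, hence $C^0$-close to $A$. Theorem~\ref{NFext} then provides local charts $\h_x$ with $\h_x(x)=0$, $D_x\h_x=\Id$, depending continuously on $x$ in the $\Ci$ topology and conjugating $F$ to $P_x\in\p_A$; since $f$ contracts $\w$, Remark~\ref{NFglob} extends the $\h_x$ uniquely ``by invariance'' to global diffeomorphisms $\h_x:\w_x\to\E_x$ still satisfying \eqref{NF}. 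This yields property~(1) and the continuity half of~(2). Property~(4) is the leafwise instance of the centralizer statement: the restriction of $g$ to the leaves is an extension of $g$ that commutes with $F$, preserves the zero section, and is $C^{d(A)+\e}$ along the leaves, so the global form of \eqref{cent} in Remark~\ref{NFglob} applies.

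The substance of the theorem is~(3), from which leafwise smoothness in~(2) follows as well. Fix $x$ and $y$ in one leaf and put $h_{xy}=\h_y\circ\h_x^{-1}:\E_x\to\E_y$. Iterating the relation $h_{xy}=P_y^{-1}\circ h_{f(x),f(y)}\circ P_x$ gives, with $P^{(n)}_x:=P_{f^{n-1}(x)}\circ\dots\circ P_x\in\p_A$,
\[
   h_{xy}=(P^{(n)}_y)^{-1}\circ h_{f^n(x),\,f^n(y)}\circ P^{(n)}_x .
\]
Because $\w$ is contracted, $d(f^n(x),f^n(y))\to 0$; combined with the uniform $\Ci$ bounds on the leaves and the continuous dependence of $\h$ on the base point, this forces $h_{f^n(x),f^n(y)}\to\Id$ in $\Ci$ on compact sets. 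On the other hand, since $\h$ conjugates $f$ to the $P$'s and $f$ contracts $\w$, the polynomial maps $P^{(n)}_x$ drive every compact subset of $\E_x$ into an arbitrarily small neighborhood of $0$ once $n$ is large. Feeding these two facts into the displayed identity and reading off the Taylor coefficients at $0$, one shows — by the same sub-resonance mechanism as in the proof of Theorem~\ref{NFext} — that the spectral gaps of $A$ force every Taylor coefficient of $h_{xy}$ of degree exceeding $d(A)$, and every non--sub-resonance coefficient, to vanish, so that $h_{xy}$ is a sub-resonance generated polynomial in $\p_A$ composed with the translation by $h_{xy}(0)=\h_y(x)$. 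Smoothness of $x\mapsto\h_x$ along $\w$ in~(2) then comes from writing $\h_y=h_{xy}\circ\h_x$ and observing that the finitely many coefficients of $h_{xy}$ depend smoothly on $y\in\w_x$.

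The hard part will be to make the coefficient-killing step rigorous. In the displayed identity the middle factor tends to the identity, but the outer factors $P^{(n)}_x$ and $(P^{(n)}_y)^{-1}$ do \emph{not} converge, so one cannot simply pass to the limit; instead, following \cite{KS16}, one runs the non-stationary estimates of Theorem~\ref{NFext} simultaneously along the two leaves and bootstraps the regularity of the fixed map $h_{xy}$ degree by degree, using $n$ as a free parameter. The uniformity in $x$ of all leafwise derivatives of $\h_x$, guaranteed by the hypothesis that $\w$ has uniformly $\Ci$ leaves, is precisely what keeps the error terms under control throughout.
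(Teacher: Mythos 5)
This theorem is not proved in the paper at all: it is imported verbatim from \cite{KS16}, and the paper explicitly states that the genuinely new content is parts (2) (leafwise smoothness) and (3). What the paper does supply, in the paragraph preceding the theorem, is exactly your first step: restrict $f$ to the leaves, identify $\w_x$ locally with $T_x\w$ to get a $\Ci$ contracting extension $F$ with derivative close to $A$, invoke Theorem \ref{NFext} for the local charts, and extend globally by invariance as in Remark \ref{NFglob}. Your derivation of (1), the continuity half of (2), and (4) from this reduction coincides with the paper's, and is correct (for (4) note only that the leafwise restriction of $g$ does fix the zero section because $g$ maps the basepoint $x$ of $\w_x$ to the basepoint $g(x)$ of $\w_{g(x)}$, and that ``$C^{d(A)+\e}$ along the leaves'' must be read as uniform in $x$ so that the hypotheses of Theorem \ref{NFext} on the extension are met).

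For part (3) and the leafwise smoothness in (2) there is nothing in this paper to compare against. Your sketch — iterate $h_{xy}=(P^{(n)}_y)^{-1}\circ h_{f^n(x),f^n(y)}\circ P^{(n)}_x$, use contraction of $\w$ and continuity of $\h$ to control the middle factor, and kill the non--sub-resonance and high-degree Taylor coefficients via the spectral gaps of $A$ — is the right strategy and is consistent with \cite{KS16}, but as you acknowledge, the decisive step (propagating the degeneration of the middle factor through the two non-convergent outer polynomial factors, coefficient by coefficient) is exactly the technical content of that reference and is not carried out here. So your proposal correctly reduces everything the present paper reduces, and leaves open precisely what the present paper also leaves to the citation; as a self-contained proof it has an acknowledged gap at the coefficient-killing estimate, which you should either execute in full or explicitly cite \cite{KS16} for.
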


 Another way to interpret (3) is to view $\h_x$ as a coordinate chart on 
$\w_x$, identifying it with $\E_x$, and in particular  identifying $\E_y=T_y \w_x$ with 
$T_{\h_x(y)}\E_x$ by $D_y\h_x $.  In this coordinate chart,  
(3) yields that all transition maps $\h_y \circ \h_x^{-1}$ for $y\in \w_x$ are  in the group 
generated by the translations of $\E_x$ and the sub-resonance generated polynomials,
which is isomorphic to the Lie group $\bar \p_A$ generated by $\p _A$ and the
 translations of $\R^k$.  Clearly, this group is also finite dimensional.


\section{Proof of Theorem \ref{localG}} \label{PlocalG}

By standard considerations we may assume that $h$ is homotopic to $id_{\T^d}$. 
Indeed, the induced linear map $h_*\colon \T^d\to\T^d$ is in the centralizer of $L$. 
Hence, by compositing with $h_*^{-1}$, we may assume that $h_*=id$, 
i.e., $h$ is homotopic to the identity map.  Note that $h$ does not have to be $C^0$ close to identity.

\subsection{Outline of the proof}
  We denote the stable, unstable, and center sub-bundles for $L$
 by $E^s$, $E^u$, $E^c$, and the ones for $f$ by $\E^s$, $\E^u$, $\E^c$.
 Similarly, we use $W$ and $\w$ for the corresponding foliations for $L$ 
 and  for $f$. Lemma \ref{hmaps} below  
 shows that the conjugacy $h$ respects the foliations,
so essentially we study its smoothness by restricting it to $\w^s$, $\w^u$, and $\w^c$.
The first part of the proof, Section \ref{hsmoothWs},  is showing smoothness along 
the stable and unstable foliations using normal forms and center holonomies.

The second  part of the proof is to establish uniform smoothness of $h$ along the center foliation. We first do it for the stable and unstable components
of $h$ in Section~\ref{Gsmooth} and then global smoothness of the stable and unstable components follows by the standard application of the Journ\'e's Lemma~\cite{J}.
Finally, wee use a different argument to establish global smoothness of the center component in Section~\ref{Csmooth}. 
  
The following lemma has a  rather standard proof and we include it for the sake of completeness.
 
 \begin{lemma} \label{hmaps} 
 Let $L$ be a partially hyperbolic toral automorphism and let $f$ be a dynamically coherent 
 partially hyperbolic toral diffeomorphism topologically conjugate to $L$ by a homeomorphism $h$.
 Then $h(\w^{*})=W^{*}$ for $\ast = s,u,c,cs,cu$. 
\end{lemma}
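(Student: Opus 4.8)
The plan is to prove the lemma by first recalling the standard fact that the topological conjugacy $h$ between a partially hyperbolic diffeomorphism and its linearization carries stable and unstable \emph{sets} to stable and unstable sets, and then showing that these coincide with the leaves of the respective invariant foliations. Since $h$ is homotopic to the identity (by the standard reduction in Section~\ref{PlocalG}) and conjugates $f$ to $L$, it lifts to a map $\tilde h$ of $\R^d$ at bounded distance from the identity. For the stable foliation, I would argue: if $y\in \w^s(x)$ then $\dist(f^n x, f^n y)\to 0$ as $n\to +\infty$, hence $\dist(L^n h(x), L^n h(y))=\dist(h(f^n x), h(f^n y))\to 0$ by uniform continuity of $h$; since $L$ is partially hyperbolic with $L$-invariant splitting, a standard contraction-mapping/hyperbolic estimate on the lift shows that the set of points whose forward $L$-orbit stays at bounded distance and contracts is exactly the coset $h(x)+E^s$, so $h(\w^s(x))\subseteq W^s(h(x))$. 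Applying the same reasoning to $h^{-1}$ (which conjugates $L$ to $f$) and using $L^{-n}$-contraction on $W^u$ gives the reverse inclusions, so $h(\w^s(x))=W^s(h(x))$, and symmetrically for $\w^u$.

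For the center-stable and center-unstable foliations I would use dynamical coherence together with the characterization of $\w^{cs}$ as saturation: $\w^{cs}(x)=\bigcup_{y\in\w^c(x)}\w^s(y)$, and likewise $W^{cs}(p)=\bigcup_{q\in W^c(p)}W^s(q)$. Granting for the moment that $h(\w^c(x))=W^c(h(x))$, combining this with the already-established $h(\w^s(\cdot))=W^s(h(\cdot))$ yields $h(\w^{cs})=W^{cs}$ immediately, and analogously $h(\w^{cu})=W^{cu}$. So the crux is the center statement $\ast=c$.

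For the center foliation, I would characterize $\w^c(x)$ as the set of $y$ for which $\dist(f^n x,f^n y)$ grows \emph{subexponentially} in both time directions — more precisely, for which the lifted orbits $\tilde f^{\,n}$ of lifts stay within a sublinear (in fact, for the linear model, polynomially bounded) tube — whereas points off $\w^c$ but on $\w^{cs}$ or $\w^{cu}$ have a genuinely contracting or expanding component and hence escape, and points off $\w^{cs}\cup\w^{cu}$ escape exponentially in both directions. Transporting this through $h$ (using that $h$ and $h^{-1}$ are uniformly continuous and move points a bounded amount, so they distort distances only sub-exponentially) shows $y\in\w^c(x)\iff h(y)$ lies in the corresponding $L$-invariant set, which for the linear map is exactly the coset $h(x)+E^c=W^c(h(x))$. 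Equivalently, and more cleanly, one can deduce $h(\w^c)=W^c$ from $h(\w^{cs})=W^{cs}$ and $h(\w^{cu})=W^{cu}$ once those are known independently, via $\w^c(x)=\w^{cs}(x)\cap\w^{cu}(x)$ and $W^c(p)=W^{cs}(p)\cap W^{cu}(p)$; to avoid circularity I would establish $h(\w^{cs})=W^{cs}$ and $h(\w^{cu})=W^{cu}$ directly from the dichotomy "forward orbit does not separate exponentially" $\iff$ "lies in $\w^{cs}$", which only uses the stable direction and the uniform partial hyperbolicity constants $\nu<1$, and symmetrically for $cu$, and then intersect.

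The main obstacle I anticipate is the bookkeeping needed to turn the qualitative statements ("orbits stay close", "orbits separate") into precise inclusions: one must work on the universal cover with the lift $\tilde h$ at bounded distance from the identity and from a linear lift of $L$, control how $h$ distorts the exponential/subexponential separation rates (a Hölder or merely uniformly continuous $h$ suffices since we only need to preserve the \emph{exponential type} of the growth, and bounded displacement on the cover guarantees this), and invoke the shadowing/structural-stability picture from \cite{HPS} to know $f$ has exactly the invariant foliations $\w^s,\w^u,\w^c,\w^{cs},\w^{cu}$ with the expected leaf-inclusion relations. Everything else is the routine characterization of cosets of $E^s$, $E^u$, $E^c$, $E^{cs}=E^s\oplus E^c$, $E^{cu}=E^u\oplus E^c$ as the loci of prescribed exponential growth rates of $L^{\pm n}$, which holds because $L$ is diagonalizable over $\C$ and hence has clean Lyapunov behavior on each invariant subspace.
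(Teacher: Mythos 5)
Your overall architecture is essentially the paper's: characterize leaves by the asymptotic behavior of orbits, transport this through $h$ (and $h^{-1}$) using uniform continuity, obtain the $cs$ and $cu$ statements from the dichotomy ``orbits stay close forward in time'' versus ``exponential separation in the unstable direction,'' and recover $\w^c$ as $\w^{cs}\cap\w^{cu}$. The paper does exactly this, running the $cs$ argument through $h^{-1}$ with an adapted metric in which $L$ is non-expanding on $W^{cs}$, and upgrading the inclusion to an equality of leaves via connectedness and Invariance of Domain.

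The one step that does not go through as written is obtaining the \emph{reverse} inclusions for $\ast=s,u$ by ``applying the same reasoning to $h^{-1}$.'' That would require the implication $\dist(f^nx,f^ny)\to 0\Rightarrow y\in\w^s(x)$ for the nonlinear map $f$, and this is not a formal consequence of partial hyperbolicity plus dynamical coherence: the convergence could a priori occur through the center direction, since $\|Df|_{\E^c}\|$ is only pinched between $\gamma$ and $\hat\gamma$ and nothing forbids non-uniform contraction there. (For $L$ itself the analogous statement holds only because $L|_{E^c}$ has all eigenvalues of modulus one; $f$ need not share this pointwise, and asserting it because $f$ is conjugate to $L$ is circular.) The paper proves only the inclusion $h(\w^s(x))\subset W^s(h(x))$ dynamically and then deduces equality topologically from $h$ being a homeomorphism together with the already-established leafwise bijection on $\w^{cs}$; you should do the same. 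A smaller caveat: your first proposed characterization of $\w^c$ by two-sided subexponential separation is delicate even on the universal cover (to exclude points of $\w^s(x)$ you must show their backward separation is genuinely exponential, which needs quasi-isometry of the lifted leaves); your fallback of intersecting the $cs$ and $cu$ statements is the right move and is what the paper does. Also note that the separation argument for $cs$ uses the \emph{unstable} expansion rate $\hat\nu>\hat\gamma$, not just the stable constant $\nu<1$.
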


\begin{proof} We show that center unstable leaves for $f$ are mapped to those for $L$. 
With respect to a suitable metric, $L$ does not increase distances along $W^{cs}$, that is, 
$\dist (L^nx,L^ny)\le \dist (x,y)$ for any $y \in W^{cs}(x)$ and $n \in \N$. Then $h^{-1}(y)$ will remain close to $h^{-1}(x)$ under forward iterates of $f$. More precisely,  for any $\e >0$ there
is $\delta >0$ such that $\dist (f^n(h^{-1}(x)),f^n (h^{-1}(y)) )\le \e$ for any  $n \in \N$ and $y \in W^{cs}(x)$ with $\dist (x,y)< \delta$. If $\e$ is sufficiently small, this implies that $h^{-1}(y) \in \w^{cs}(h^{-1}(x))$, as otherwise they would separate exponentially along the unstable direction until reaching a ``moderate" distance $>\e$.
By connectedness, all points of $W^{cs}(x)$ must be mapped 
to the same center stable leaf of $f$, so we get $h^{-1}(W^{cs}(x)) \subset \w^{cs}(h^{-1}(x))$.
The equality follows from $h$ being a homeomorphism. Applying the Invariance of Domain Theorem
to $h^{-1}$ from a small ball in $W^{cs}(x)$ to 
$\w^{cs}(h^{-1}(x))$, we conclude that $h$ is a local homeomorphism between center stable leaves
of $f$ and $L$ on small balls of fixed size. By connectedness, all points of $\w^{cs}(h^{-1}(x))$ 
must come from the same center stable leaf of $L$.

Similarly, $\w^{cs}$ is mapped to $W^{cs}$, and it follows that  $\w^{c}$ is mapped to $W^{c}$
as the intersection of $\w^{cs}$ and $\w^{cu}$.

We also have $\w^{s}$ is mapped to $W^{s}$, and similarly for $\w^{u}$ and $W^{u}$. 
Indeed if $y \in \w^s(x)$ then $d(f^nx,f^ny) \to 0$ and hence $d(L^nh(x),L^nh(y)) \to 0$.
 It follows that $h(y) \in W^{s}(h(x))$ and so $h(\w^{s}(x)) \subset W^{s}(h(x))$. 
 The equality again follows since $h$ is a homeomorphism.
\end{proof}


 \subsection{Smoothness of the conjugacy along the stable leaves.} \label{hsmoothWs}
 
Since $f$ is a small perturbation of $L$, 
Theorem \ref{MainNF} applies and yields existence of the normal forms  on 
$\w^s$ and $\w^u$ corresponding to the groups of sub-resonance generated polynomials 
$\p_s=\p_{L|E^s}$ and $\p_u=\p_{L|E^u}$, respectively. 

Now we consider the holonomies $\H=\H^c$ of $\w^{c}$ inside $\w^{cs}$, that is, the maps 
 $$
 \H_{x,y} :\w^{s}(x) \to \w^{s}(y) \quad\text{given by}\quad
  \H_{x,y}(z)= \w^c(z)\cap  \w^{s}(y).
  $$
  The corresponding linear holonomies $H$ for $L$ are the translations along $W^c$:
  if $y\in W^c(x)$ then $H_{x,y}(z)=z+ (y-x)$.
Because $h$ maps stable leaves to stable leaves and center leaves to center leaves, 
the topological conjugacy $h$ intertwines the holonomies $\H$ and $H$, that is,
 $$
 H_{h(x),\,h(y)}=h\circ \H_{x,y} \circ h^{-1}.
 $$
It follows that $\H$ is globally defined on the leaves of $\w^s$ and is as smooth as $\w^c$.

We fix a pair of complex conjugate eigenvalues of $L$ of absolute value 1 and denote by $V$  
the corresponding invariant 2-dimensional subspace of $E^c$.  We will now show 
 that holonomies $\H$ which are conjugate to translations in $V$ preserve normal forms on  $\w^{s}$.

 \begin{proposition}  \label{normal holonomy} 
 For each $x\in \T^d$ and $y\in \w^c(x)$ with $h(x)-h(y) \in V$, the center holonomy 
 $\H_{x,y}:\w^{s}(x) \to \w^{s}(y)$  preserves normal forms on  $\w^{s}$, that is, 
 $ \h_y \circ  \H_{x,y} \circ  \h_x^{-1} :\E_{x}^s \to \E_{y}^s$ 
 is a sub-resonance generated polynomial in $\p_s$.
 \end{proposition}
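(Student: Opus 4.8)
The plan is to reduce the statement to the centralizer part of the normal forms theorem (Theorem~\ref{MainNF}(4)) applied to the subgroup action generated by $L$ and the holonomies. First I would observe that the linear holonomy $H_{x,y}$ is a translation in $V\subset E^c$ when $h(x)-h(y)\in V$, and that these translations, together with $L$, generate an action of $\Z\times\R$ (or $\Z\ltimes\R$) by maps commuting appropriately and preserving $\w^s$. The key point is to realize $\H_{x,y}$ as the time-one map (or a time-$t$ map) of a flow of center holonomies that commutes with $f$ up to the $L$-action on parameters, so that Theorem~\ref{MainNF}(4) or its analog for such a commuting family can be invoked.

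More precisely, the concrete steps are: (1) Use Lemma~\ref{hmaps} and the intertwining relation $H_{h(x),h(y)} = h\circ\H_{x,y}\circ h^{-1}$ to show $\H$ is globally defined on each $\w^s$-leaf and is $C^r$ along those leaves, since $\w^c$ is $C^r$ and $r>r(L)\ge d(L|E^s)$. (2) Parametrize the holonomies: for $x$ fixed and $v\in V$, set $y(v)$ to be the point of $\w^c(x)$ with $h(y(v))-h(x)=v$ (well-defined since $h$ maps $\w^c$-leaves to $W^c$-leaves homeomorphically and the $V$-direction is a subspace), and define $\Psi^v_x := \H_{x,y(v)}\colon \w^s(x)\to\w^s(y(v))$. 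This gives an $\R^2$-parametrized family of holonomies with $\Psi^{v+w} = \Psi^w\circ\Psi^v$ (cocycle over translations in $V$). (3) Verify the commutation with $f$: since $f$ preserves $\w^c$ and $\w^s$ and conjugates via $h$ to $L$, which acts on $V$ linearly (say by a rotation-scaling $R$, but here $|\text{eigenvalue}|=1$ so $R$ is a Euclidean rotation), we get $f\circ \Psi^v_x = \Psi^{Rv}_{f(x)}\circ f$. (4) Transport to normal form coordinates: conjugating by $\h_x$ (the normal forms on $\w^s$), the maps $\tilde\Psi^v_x := \h_{y(v)}\circ\Psi^v_x\circ\h_x^{-1}\colon\E^s_x\to\E^s_{y(v)}$ satisfy $P_x\circ\tilde\Psi^v_x = \tilde\Psi^{Rv}_{f(x)}\circ P_x$, where $P_x\in\p_s$. (5) Apply Theorem~\ref{MainNF}(4), or rather its proof adapted to this setup, to the extension of the $\R^2$-action by $\{\Psi^v\}$ to conclude each $\tilde\Psi^v_x\in\p_s$; the hypothesis on regularity ($\w^c$ is $C^r$ with $r>r(L)$) is exactly what is needed for the centralizer part to apply, since $d(L|E^s)=r^s(L)\le r(L)<r$.

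The main obstacle I expect is step~(5): Theorem~\ref{MainNF}(4) as stated concerns a \emph{single} homeomorphism $g$ commuting with $f$, whereas here I need it for a continuous family $\{\Psi^v\}$ of maps that commute with $f$ only up to the rotation action $v\mapsto Rv$ on the parameter. One must check that the proof of the normal forms centralizer statement goes through for this twisted-commuting family --- essentially, for each fixed $v$, the map $\Psi^v$ together with the $f$-dynamics generates a well-defined extension to which the argument of Theorem~\ref{NFext}/\eqref{cent} applies, using that the orbit $\{R^n v\}$ stays bounded (here it lies on a circle) so that the required contraction/boundedness on the relevant bundle holds and the polynomial normal form is forced. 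A secondary technical point is ensuring $y(v)$ depends continuously (indeed smoothly along $\w^c$, hence with the right regularity) on $v$ and $x$, which follows from $h$ being a bi-Hölder conjugacy mapping $\w^c$ to $W^c$ together with the $C^r$ regularity of $\w^c$; this is where the hypothesis $r>r(L)$ enters quantitatively.
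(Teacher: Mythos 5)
You have correctly set up the problem and, importantly, you have put your finger on exactly the right difficulty: the holonomy $\H_{x,y}$ does \emph{not} commute with $f$; it only satisfies the twisted relation $f\circ \Psi^v_x=\Psi^{Rv}_{f(x)}\circ f$, where $R=L|_V$ is conjugate to an irrational rotation. But your step (5) does not resolve this difficulty --- it only asserts that ``one must check that the proof of the normal forms centralizer statement goes through for this twisted-commuting family.'' Theorem~\ref{MainNF}(4) and \eqref{cent} are stated, and proved in the cited references, for a genuine commuting pair, and extending them to a family commuting only up to a rotation of the parameter is precisely the nontrivial content that your proposal leaves unproved. Gesturing at boundedness of the orbit $\{R^nv\}$ is not a substitute for the argument; as written, the proof has a gap at its central step.

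The paper closes this gap with a specific device (borrowed from \cite{FKSp11}): pass to the mapping tori $M_f$, $M_L$ and the suspension flows, and extend the translation $H_v$ to the map $\tilde H_v$ induced by $T_v(x,t)=(x+R^{-t}v,t)$. Because $R$ is conjugate to rotation by $2\pi\theta$, one has $R^{1/\theta}=\Id$, so $\tilde H_v$ \emph{genuinely} commutes with the time-$1/\theta$ map $L^{1/\theta}$ of the suspension flow (note $1/\theta\notin\Z$, which is exactly why the suspension is needed to make sense of this time). One then builds normal forms for $f^{1/\theta}$ on the lifted stable foliation in $M_f$, applies Theorem~\ref{MainNF}(4) verbatim to $g=\th^{-1}\circ\tilde H_v\circ\th$ (which is a center holonomy, hence $C^r$ along leaves, and commutes with $f^{1/\theta}$), and restricts to the level $t=0$. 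If you want to avoid the suspension and instead reprove the centralizer statement for twisted-commuting families, that may well be feasible using the boundedness of $\{R^nv\}$, but it would require redoing the inductive elimination of non-sub-resonance terms in that setting; you have not done this, so the argument as proposed is incomplete.
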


\begin{remark} \label{normal holonomy G}
We will later extend this result in Proposition \ref{normal holonomy L}, which implies 
that {\em all} center holonomies preserve normal forms on $\w^{s}$. Using this we can replace
the assumption that $L$ is irreducibility by the  assumption that $L$ is
diagonalizable over $\C$ and has dense center foliation. This is  yield a somewhat
more general result given in Remark \ref{localGdense}
 \end{remark}

\begin{proof}
For any vector $v\in V$, the translation $H_v (x) =x+v$, $x\in\T^d$, is a globally defined map
whose restriction to any stable leaf is a center holonomy for $L$. While $L$ and $H_v$
do not commute, we have $L(H_v(x))=Lx+Rv$, where the restriction $R=L|_{V}$ is a 
linear map conjugate to the rotation by some angle $2\pi \theta$. We will denote by $R^{t}$ 
the corresponding conjugate of the rotation by the angle $2\pi \theta t$, for which $R^{1/\theta}=\Id$. 
Therefore, in order to apply Theorem \ref{MainNF}, we pass to the suspension flow
and use time-$1/\theta$ map.
The argument below is inspired by the one in \cite{FKSp11}.

\vskip.05cm

We consider the mapping tori
$$
M_f=\T^d\times [0,1]\,/ \,(x,1)\sim (f(x), 0)\,\,\,\,\,\textup{and}\,\,\,\, M_L=\T^d\times [0,1]\,/\, (x,1)\sim (L(x), 0)
$$
and the corresponding suspension flows $\{f^s\}$ and $\{L^s\}$ given by $(x,t)\mapsto (x, t+s)$. 
Then $h$ induces the conjugacy 
$\th\colon M_f\to M_L$  between the suspension flows $\th (x,t)=(h(x), t)$.
We also consider the map $T_v : \T^d\times \R \to \T^d\times \R$  given by
$$
T_v(x,t)=(x+R^{-t}v,\, t)
$$
and its projection $\tilde H_v: M_L\to M_L$ to $M_L$. The translation $H_v$  
embeds as $t=0$\, level of  the map $\tilde H_v$.
The projection $\tilde H_v$ is well-defined since 
$$
\begin{aligned} 
&T_v(x,1)=(x+R^{-1}v,\,1)\;\text{ is identified with }\\
&T_v(Lx,\,0)=(Lx+v,\,0)=(Lx+LR^{-1}v),\,0)=(L(x+R^{-1}v),\,0).
\end{aligned}
$$
We note that $T_v$ commutes with the map $(x,t)\mapsto (x, t+1/\theta)$ 
on $\T^d\times \R$. Indeed,  as $R^{1/\theta}=\Id$ we obtain
$$
T_v (x,\, t+1/\theta)= (x+R^{-t+1/\theta}v,\, t+1/\theta) =(x+R^{-t}v,\, t+1/\theta).
$$
It follows that  $\tilde H_v$ commutes with time-$1/\theta$ map $L^{1/\theta}$,
 as the projections to $M_L$. 
\vskip.1cm

Since $f$ is a small perturbation of $L$, Theorem \ref{MainNF} applies to the time-${1/\theta}$ map
of the suspension flow $\{f^s\}$ and yields existence of the normal form coordinates $\{\h_x\}$ on 
 its stable foliation $\tilde \w^{s}$ in $M_f$. In fact, the corresponding groups of sub-resonance 
generated polynomials $\p_s=\p_{L|E^s}$ are the same for all $t$. 

For any $v\in V$, we have that the map $g=\th^{-1}\circ \tilde H_v \circ \th: M_f \to M_f$ 
is a holonomy map of the lifted center foliation $\tilde \w^c$, and hence is $C^r$  along the leaves.
Since $\tilde H_v$ commutes with $L^{1/\theta}$ we obtain that $g$ commutes with $f^{1/\theta}$. 
Thus  part (4) of Theorem \ref{MainNF} applies and we conclude that  
$ \h_{g(x)} \circ  g \circ  \h_x^{-1} :\tilde \E_{x}^s \to \tilde \E_{g(x)}^s $ is a sub-resonance generated 
polynomial. In particular, this holds at the level $t=0$ of $M_f$ where $g$ coincides
with a holonomy map of $\w^c$ on $\T^d$. Moreover, any holonomy map $\H_{x,y}$ as
in the statement is given by $\th^{-1}\circ \tilde H_v \circ \th$ for some $v\in V$. Thus we 
conclude that $ \h_y \circ  \H_{x,y} \circ  \h_x^{-1} :\E_{x}^s \to \E_{y}^s$ 
 is a sub-resonance generated polynomial map.
\end{proof}

We fix arbitrary $x\in \T^d$ and $y\in \w^{s}(x)$. Since $L$ is irreducible and $V$ is $L$-invariant,
the linear foliation of planes parallel to $V$ has dense leaves in $\T^d$. Hence there exists a 
sequence of vectors $v_n\in V$ such that $h(x)+v_n$ converges to $h(y)$. Denoting 
$y_n=h^{-1}(h(x)+v_n)$ we obtain a sequence of points $y_n\in \w^c(x)$ converging to $y$
so that Proposition \ref{normal holonomy} applies to holonomies $\H_{x,y_n}: \w^s(x) \to \w^s(y_n)$.
The corresponding linear holonomies $H_{h(x),\,h(y_n)}=H_{v_n}$
for $L$ converge in $C^0$ to the translation $H_v$ in $W^s(h(x))$ by the vector $v=h(y)-h(x)$. Hence
the holonomies $\H_{x,y_n}$ converge in $C^0$ norm to some map $\H_{x,y}: \w^s(x) \to \w^s(y)$, which 
is the conjugate by $h$ of this linear translation.

By Proposition \ref{normal holonomy}, 
$\,\H_{x,y_n}$ is a sub-resonance generated polynomial map  $P_n$ in normal form coordinates, i.e.
 $$
 P_n =  \h_{y_n} \circ  \H_{x,y_n} \circ  \h_x^{-1} : \, \E_{x}^s \to \E_{y_n}^s.
 $$
Since the normal form coordinates $\h_y$ depend continuously on $y$,
the maps $P_n$ converge in $C^0$ to the map
$$
P=  \h_{y} \circ  \H_{x,y} \circ  \h_x^{-1} : \, \E_{x}^s \to \E_{y}^s,
$$
which is also a sub-resonance generated polynomial. 
Using (3) of Theorem \ref{MainNF} and identifying $\w^s(x)$ with $\E^s_x$ by the
$C^\infty$ coordinate map $\h_x$,
we see as in the remark after Theorem \ref{MainNF} that $P$ is in the Lie group $\bar \p _x$
generated by the translations of $\E_x^s$ and the sub-resonance generated polynomials,
which is isomorphic to the Lie group $\bar \p_A$ generated by $\p _A$ and the
 translations of $\R^k$. 
\vskip.1cm

Thus $h$ conjugates the action of $E^s=\R^k$ by translations of $W^s(h(x))$ 
with the corresponding continuous action of $\R^k$ by elements of the Lie group $\bar \p_x$
of $C^\infty$ polynomial diffeomorphisms of $\w^s(x)$. This conjugacy defines the
 continuous homomorphism 
 $$
 \eta_x : E^s \to  \bar \p_x \quad\text{given by}\quad 
 \eta_x (v)= h^{-1} \circ H_v \circ h.
 $$ 
It is a classical result that $\eta_x$ is automatically a $C^\infty$ homomorphism, see
for example \cite[Corollary 3.50]{Ha}.  Since  $\eta_x$ determines the conjugacy along the leaf by  
 $$
 h^{-1}(h(x)+v)=\eta_x (v)(x),
 $$
  we conclude that $h^{-1}$ is a $C^\infty$ diffeomorphism between $W^s(h(x))$ and 
  $\w^s(x)$, and hence $h$ is also $C^\infty$ along $\w^s(x)$. 
 
 Since the normal form coordinates $\h_x$, as well as holonomies and their limits, 
 depend continuously on $x$, the constructed continuous action on $\w^s(x)$ and 
 the corresponding homomorphism $\eta_x$ also depend continuously on $x$. 
 This implies that $\eta_x$ depend continuously on $x$ in $C^\infty$ topology,
 for example because it is determined by the corresponding linear homomorphism 
 of the Lie algebras. So we conclude that $h$ is uniformly $C^\infty$ along $\w^s$. 
 
 A similar argument shows that $h$ is uniformly $C^\infty$ along $\w^u$.

\begin{remark} 
The last part of the proof is similar to an argument pioneered by Katok and Spatzier in \cite{KtSp} and 
used in other papers on higher rank actions. In these arguments a continuous action 
by $C^\infty$ diffeomorphisms of $\w^s(x)$ is obtained. The smoothness of this action,
and hence of $h$, follows then from a more difficult result \cite[Section 5.1, Corollary]{MZ}.
This argument, however, does not immediately yield that $h$ is {\em uniformly} $C^\infty$ 
along $\w^s$. Our argument relies on the advanced results on normal forms from \cite{KS16}
to show that all maps $P$ are contained in a single Lie group $\bar \p_x$.
\end{remark}


\subsection{The conjugacy $h$ is volume-preserving} 
We denote the Lebesgue measure on $\T^d$ by $m$ and the $f$-invariant volume by $\mu$.
We will show that $h_*(\mu)=m$.

We denote the Lyapunov exponents of $f$ with respect to $\mu$ by $\lambda^f$,
and the Lyapunov exponents of $L$ by $\lambda^L$.
Since $Df|_{\E^u}$ is conjugate to $L|_{E^u}$ by the derivative of $h$ along $\w^u$, 
the Lyapunov exponents  of $f$ along $\E^u$ with respect to $\mu$ are equal to 
the unstable Lyapunov exponents  of  $L$.  Since $f$ and $L$ are topologically conjugate, 
they have the same topological entropy. 
Combining these observations with Pesin's formula for the metric entropy,
we obtain 
$$
\mathbf{h}_{top}(f)\ge \mathbf{h}_{\mu}(f)= \sum_{\lambda ^f >0} \lambda ^f \ge \sum_{\lambda ^f  \text{on $\E^u$}} \lambda ^f
=  \sum_{\lambda ^L >0} \lambda ^L = \mathbf{h}_{m}(L) = \mathbf{h}_{top}(L) =\mathbf{h}_{top}(f).
$$
Therefore, $\mathbf{h}_{top}(f)= \mathbf{h}_{\mu}(f)$, that is, $\mu$ is the measure of maximal entropy for $f$,
and so is the measure $h^{-1}_*(m)$ since $\mathbf{h}_{h^{-1}_*(m)}(f)=\mathbf{h}_{m}(L)$ by isomorphism.
We conclude that $h_*(\mu)=m$ by uniqueness of the measure of maximal entropy~\cite{B}.


\subsection  {Global smoothness of the stable and unstable components} \label{Gsmooth}
We already proved that $h$ is uniformly $C^\infty$ along $\w^{s}$ and $\w^{u}$. 
To show global smoothness of $h$ we now study its regularity along $\w^{c}$. 
For this we will decompose $h$ into stable, unstable, 
 and center components and consider them separately using their series representations.
In this section we will obtain uniform smoothness along $\w^{c}$ of the stable and unstable 
components and thus establish their global smoothness by Journ\'e's Lemma.
 For the center component, in the next section, we will use a different argument 
 based on exponential mixing and  a regularity result from \cite{FKSp13}.
  
   \vskip.1cm
   
Recall that $h\circ f=L\circ h$. We denote by $\bar f$ and $\bar h$ the lifts of $f$ and $h$ to $\R^d$ which are compatible with the standard lift of $L$ so that we have $\bar h\circ \bar f=L\circ \bar h$. Also recall that $h$ is homotopic to the identity
and $f$ is homotopic to $L$. Hence we can write
$$
\bar h=\Id +\bar  H \quad\text{and}\quad  \bar f=L+\bar  F,
$$
where $\bar  H, \bar  F:\R^d \to \R^d$ are $\Z^d$-periodic, and hence can be viewed as functions 
$H$ and $F$ from $\T^d$ to $\R^d$. 
Then the commutation relation 
$$
(\Id + \bar  H)\circ(L + \bar  F)=L\circ (\Id +\bar  H)\quad\text{yields}\quad
\bar  H= L^{-1} (\bar  H\circ \bar f)+L^{-1} \bar  F.
$$
It is easy to check that the latter projects to the torus as the following equation for $\R^d$-valued
functions on $\T^d$
$$
H= L^{-1} (H\circ f)+G, \quad \text{where } G=L^{-1}  F.
$$

Using the $L$-invariant splitting $\R^d= E^u\oplus E^c\oplus E^s$ we define the projections 
$H_*$ and $G_*$ of $H$ and $G$ to $E^*$, where $*=s,u,c$, and obtain  
\begin{equation} \label{H_*}
H_*= L_*^{-1} (H_*\circ f)+G_*, \quad \text{where } L_*=L|_{E^*}.
\end{equation}
Thus $H_*$ is a fixed point of the affine operator 
\begin{equation} \label{T*}
T_* (\psi )=L_*^{-1} (\psi \circ f)+G_*
\end{equation}
with the inverse $T_*^{-1} (\phi )=L_*\, (\phi \circ f^{-1})- L_* \, ( G_* \circ f^{-1})$.

Since $\|L_u^{-1}\|<1$, the operator $T_u$ is a contraction on the space $C^0(\T^d,E^u)$, 
and thus $H_u$ is its unique fixed point 
\begin{equation} \label{Hu}
H_u = \lim_{k\to \infty} T_u^k (0) = \sum_{k=0}^\infty L_u^{-k} (G_u \circ f^k).
\end{equation}
Similarly, $T_s^{-1}$ is a contraction on $C^0(\T^d,E^s)$ and $H_s$ is its 
unique fixed point 
\begin{equation} \label{Hs}
H_s = \lim_{k\to \infty} T_s^{-k} (0) = -\sum_{k=1}^\infty L_s^{k} (G_s \circ f^{-k}).
\end{equation}

\vskip.2cm

Our goal now is to show that $H_c$, $H_u$, and $H_s$ are $C^\infty$, which would yield that $h$ is $C^\infty$. We already know that $h$ is uniformly $C^\infty$ along  $\w^s$ and $\w^u$, 
and hence so are $H_c$, $H_u$, and $H_s$. Thus it remains to study the derivatives 
for each of these maps along $\w^c$.

We will now prove that the derivatives of $H_u$  of any order along $\w^c$ exist and are continuous 
functions on $\T^d$ by term-wise differentiation of \eqref{Hu}, and thus we will show that $H_u$ is uniformly $C^\infty$ along  $\w^c$.

First we observe that the Lyapunov exponents of $\| Df|_{\E^c}\|$ are zero with respect to 
any $f$-invariant measure. Indeed, a non-zero Lyapunov exponent implies exponential 
expansion/contraction by $f$ inside the leaves of $\w^c$, more precisely, existence of $x \in \T^d$
and $y\in \w^c(x)$ such that $\dist(f^nx,f^ny)$ decays exponentially as $n$ goes to $\infty$ or
$-\infty$. Since conjugacy $h$ is H\"older this yields similar exponential decay of $\dist(L^n h(x),L^n h(y))$,
which is impossible as $h(y) \in W^c(h(x))$.  

The fact that the Lyapunov exponents of $\| Df|_{\E^c}\|$ are zero with respect to 
any $f$-invariant measure is well-known to imply that $\| Df^n|_{\E^c}\|$ grows
 sub-exponentially, that is, for any $\e>0$ there is $C_\e$ such that 
\begin{equation} \label{Dfc}
\| Df^n|_{\E^c}\| \le C_\e\, e^{\e n}\quad\text{for all }n \in \N,
\end{equation}
see e.g. \cite{Schr98}.
It follows that the norms of all higher derivatives also grow sub-exponentially, 
 see e.g.  \cite[Lemma 5.5]{LW}: for each $m$ and $\delta>0$ there exists a constant $K_{\delta,\ell}$ such that 
\begin{equation} \label{fcm}
\|f^n \|_{C^\ell_{\w^c}} \le K_{\delta,\ell}\, e^{n\delta} \quad\text{for all }n\in \N,
\end{equation}
where $\| g \|_{C^\ell_{\w^c}}$ denotes the supremum of all derivatives of $g$ of orders up to $\ell$
along the foliation $\w^c$.

 Since $\| L_u^{-1}\|<1$, the above estimate yields that term-wise differentiation of any order 
 of \eqref{Hu} gives  an exponentially converging series. Hence the derivatives of $H_u$ of any 
 order along $\w^c$ are continuous functions on $\T^d$, that is $H_u$ is uniformly $C^\infty$ 
 along $\w^{c}$. We have already established
that $H_u$ is uniformly $C^\infty$ along $\w^u$ and $\w^s$, and so
 we  conclude that $H_u$ is $C^\infty$ on $\T^d$ by Journ\'e's lemma \cite{J}. A similar argument 
 using differentiation of \eqref{Hs} shows that $H_s$ is $C^\infty$ on $\T^d$.
 \vskip.1cm
 
We remark that  term-wise differentiation can be used to establish smoothness of
$H_u$ and $H_c$ along $\w^{s}$ and of $H_s$ and $H_c$ along $\w^{u}$,
but not of $H_u$ along $\w^{u}$ or $H_s$  along $\w^{s}$.


\subsection{Global smoothness of the center component} \label{Csmooth}

In this section we complete the proof of Theorem \ref{localG} by establishing
 global smoothness of $H_c$.  While $H_c$  is a fixed point of the
operator $T_c$ given by \eqref{T*}, $T_c$ is not a contraction  on $C^0(\T^d,E^c)$.
We will show, however, that $H_c$ can be expressed by series similarly to $H_u$ and $H_s$ 
in the sense of distributions
\begin{equation} \label{H_c}
H_c  = \sum_{k=0}^\infty L_c^{-k} (G_c \circ f^k).
\end{equation}
More precisely, we consider the distribution space $\D$ of $E^c$-valued 
functionals $\omega$ on the space of $C^\infty$ test functions 
$\eta : \T^d \to \R$ with zero average with the vector-valued pairing
$$
\langle \omega, \eta  \rangle = \int _{\T^d} \eta (x)\omega (x) \, d\mu(x).
$$
We also fix a norm $|.|$ on $E^c$ to estimate the magnitude.
We need the space of $C^\infty$ test functions only for the formal definition of distributional 
derivatives. All estimates in the proof will be done for a H\"older continuous $\eta$ and all 
distributions will be shown to be defined on the space of H\"older continuous test function.

To verify \eqref{H_c} we  iterate equation 
 \eqref{H_*}, $H_c= L_c^{-1} (H_c\circ f)+G_c$, and get that for any $j\in \N$,
\begin{equation} \label{H_c^m}
H_c = \sum_{k=0}^{j-1} L_c^{-k} (G_c \circ f^k)+ L_c^{-j} (H_c \circ f^j).
\end{equation}
Since $L_c$ is conjugate to an orthogonal matrix, $\| L_c^{-j}\|$ is bounded 
uniformly in $j$. Since $(f,\mu)$ is mixing, as isomorphic to $(L,m)$, we can estimate
 the last term in \eqref{H_c^m}  as  
$$
| \langle L_c^{-j} (H_c \circ f^j),\, \eta  \rangle|= | L_c^{-j} \,\langle  H_c \circ f^j, \eta  \rangle | \le 
\| L_c^{-j} \| \cdot |\langle  H_c \circ f^j, \eta  \rangle | \to 0
$$
as $j\to\infty$\, for any H\"older or  $L^2$ function $\eta$ with 0 average,
 and we conclude that 
\begin{equation} \label{H_c d} 
\langle H_c, \eta  \rangle = \langle \,\sum_{k=0}^{\infty} L_c^{-k} (G_c \circ f^k),\, \eta \,\rangle.
\end{equation}

\vskip.2cm

Now we will prove that $H_c$ is $C^\infty$ on $\T^d$ using a regularity result 
from \cite[Corollary 8.5]{FKSp13},
which yields that it suffices to show that the derivatives of $H_c$ of any order along $\w^c$,
$\w^s$, and $\w^u$ are distributions dual to the space of H\"older functions, i.e., their norms can be estimated
by the H\"older norm of a test function. Recall that the derivatives of $H_c$ of any order along 
$\w^s$ and $\w^u$ are continuous functions by uniform smoothness of $h$ along 
$\w^s$ and $\w^u$ established in Section \ref{hsmoothWs}. 
This can also be seen by term-wise differentiation of the series for $H_c$.
To complete the proof of smoothness of $H_c$, we will now show that the derivatives of $H_c$ 
of any order along $\w^c$ are distributions dual to H\"older functions. 
We use the following result which says that  $L$ has exponential mixing on H\"older functions.
\vskip.15cm
\noindent \cite[Theorem 6]{L}, \cite[Theorem 1.1]{GoSp} 
{\it Let $L$ be an ergodic automorphism of a 
torus, or more generally of a compact nilmanifold $X$. 
Then for any $\theta\in(0,1]$ there exists $\rho=\rho(\theta)\in (0,1)$ such that 
for all $g_0, g_1\in C^\theta(X)$ and $n\in \N$},
$$
  \int_X g_0(x) g_1(L^n(x))\, dm(x)=\left( \int_X g_0 \,dm \right) \left( \int_X g_1 \,dm \right)
  + O(\rho^n \|g_0\|_{C^\theta}\|g_1\|_{C^\theta}).
$$
\vskip.15cm
\noindent Since the bi-H\"older conjugacy $h$ maps the volume $\mu$ to the Lebesgue measure
$m$ and preserves the class of H\"older functions, the same holds for $(f,\mu)$
in place of $(L,m)$.

We fix $\ell\in \N$ and for a smooth function $g$ on $\T^d$ consider its  partial derivative 
$D^{\ell}_\a g$ of order $\ell$ along $\w^c$. We will use the same notation for distributional
derivatives along $\w^c$ (see \cite[Section 8]{FKSp13}
for detailed description of distributional derivatives in the context of foliations).
Using equation \eqref{H_c d} we obtain the formula for distributional derivative of $H_c$,
\begin{equation} \label{h_V^k}
\langle D^{\ell}_\a H_c, \,\eta  \rangle = \langle \,\sum_{k=0}^{\infty} D^{\ell}_\a (L_c^{-k} (G_c \circ f^k)),\, \eta \,\rangle.
\end{equation}
Since $G_c$ and $f$ are smooth, the terms $D^{\ell}_\a (L_c^{-k}  (G_c \circ f^k))$
are continuous functions. Now we estimate these pairings in terms of the H\"{o}lder norm of $\eta$.

We will use smooth approximations of $\eta$ by convolutions with a smooth kernel
$\eta_{\e} = \eta * \phi _{\e}$.  More precisely, we fix a smooth bump 
function $\phi$ supported on the unit ball and define
 $\phi _{\e} (x)= \e ^{-d}  \phi (x /\e)$, so that we have
$$
\phi _{\e} \ge 0, \quad \int _{\T^d} \phi _{\e} =1, \quad
\|  \phi _{\e} \| _{C^\ell}= \e ^{-(d+\ell)}  \|\phi \| _{C^\ell}.
$$
Then  for any $0<\theta \le 1$ and $\ell \in \N$ we have the standard estimates of the norms 
for any $\theta$-H\"{o}lder function $\eta$,
 \begin{equation} \label{fe}
 \begin{aligned}
& \|\eta_{\e } -\eta \|_{C^0} \leq \e ^{\theta} \|\eta\|_{\theta} \;
 \; \text{ and} \;\\
&\|\eta_{\e}\|_{C^\ell}  \leq c_\ell \, \e ^{-d-\ell}  \|\eta\| _{0} \;
 \text{ for } \ell \in \mathbb N,
 \end{aligned}
\end{equation}
\noindent where $c_\ell$ is a constant depending only on $\ell$.
 
 We split $\eta$ as $ \eta_{\e } + (\eta -\eta_{\e })$
 and estimate the corresponding pairings. 
$$
\begin{aligned}
|\langle D^{\ell}_\a L_c^{-k} (G_c \circ f^k),\,\eta_\e\rangle |  &\le
\|L_c^{-k}\| \cdot |\langle D^{\ell}_\a(G_c \circ f^k),\,\eta_\e\rangle | = \\
&= \|L_c^{-k}\| \cdot  |\langle G_c \circ f^k,\,D^{\ell}_\a \eta_\e \rangle |\, .
 \end{aligned}
$$
Since $\| D^{\ell}_\a \eta_\e \|_\theta \le \| D^{\ell}_\a \eta_\e \|_{1}
\le \| \eta_\e \|_{C^{\ell+1}}$,
using the exponential mixing and \eqref{fe} we can estimate
$$
|\langle G_c \circ f^k,\eta_\e^{\ell,c}\rangle |  \le
 K_1 \,  \rho^{k} \, \|G_c \|_\theta \, \| D^{\ell}_\a \eta_\e \|_\theta
  \le K_2 \,  \rho^{k}  \e ^{-(d+\ell+1)} \|G_c \|_\theta \, \|\eta\| _{0} \, .
$$
Since   $\| L_c^{-k}\|$ is bounded we conclude  that
\begin{equation} \label{pfe}
|\langle L_c^{-k} (G_c \circ f^k)^{\ell,c},\,\eta_\e\rangle |  \le
  K_3 \,  \rho^{k}  \e ^{-(d+\ell+1)} \|G_c \|_\theta \, \|\eta\| _{0} \, .\end{equation}

Now we estimate the pairings in \eqref{h_V^k} with $\eta-\eta_\e$. We use an 
estimate on norms of compositions of $C^\ell$ functions
$$\| h\circ g\|_{C^\ell} \le M_\ell \,  \|h\|_{C^\ell} (1+\| g\|_{C^\ell})^\ell ,
$$
which follows, for example, from  Proposition 5.5 in \cite{dlLO}. Thus we have
  $$
  \begin{aligned}
& |\langle D^{\ell}_\a L_c^{-k} (G_c \circ f^k),(\eta-\eta_\e)\rangle | \le
\|L_c^{-k} D^{\ell}_\a (G_c \circ f^k)\|_0 \cdot \| (\eta-\eta_\e)\|_0 \le\\
& \le \|L_c^{-k}\| \cdot \| G_c \circ f^k \|_{C^\ell_{\w^c}} \cdot \e ^{\theta} \|\eta\|_{\theta} \le
K_5 \, \|G_c \|_{C^\ell} \, (1+ \| f^k \|_{C^\ell_{\w^c}})^\ell \cdot\e ^{\theta} \|\eta\|_{\theta}.
\end{aligned}
$$
Now using \eqref{fcm} we obtain 
\begin{equation} \label{pf-fe}
\begin{aligned}
&|\langle L_c^{-k} (G_c \circ f^k)^{\ell,c},(\eta-\eta_\e)\rangle |  
\le K_6\, e^{\ell k\delta}  \cdot \e ^{\theta} \cdot \|  G_c \|_{C^\ell}  \cdot  \|\eta\|_{\theta} = \\
&= K_6\, \xi^k \,\|  G_c \|_{C^\ell}  \cdot  \|\eta\|_{\theta}, \quad\text{where }\, 
\xi=e^{\ell\delta} \e^{\theta / k}.
\end{aligned}
\end{equation}

We choose $\e=\e(k) = \rho^{k/(2(d+\ell+1))}$ so that $\rho^{k} \e ^{-(d+\ell+1)}=\rho^{k/2}$
to obtain exponential decay in \eqref{pfe}.
Then we take $\delta>0$ sufficiently small so that 
$$\xi= e^{\ell\delta}\rho^{\theta/(2(d+\ell+1))}<1,
$$
which ensures exponential decay  in \eqref{pf-fe}.
Noting that $\rho^{1/2} < \xi <1$, we combine \eqref{pfe} and \eqref{pf-fe} to get
$$
|\langle L_c^{-k} (G_c \circ f^k)^{\ell,c},\,\eta\rangle |
\leq  K_7 \, \xi^{k} \cdot \|  G_c \|_{C^\ell} \, \|\eta\|_{\theta}.
$$
Thus, for any $\theta$ and derivative $D^{\ell}_\a $, we obtain exponential convergence 
in \eqref{h_V^k} and conclude that $| \langle D^{\ell}_\a  H_c,\eta \rangle | \le C \|\eta\|_{\theta}$. 
Therefore $D^{\ell}_\a H_c$  extends to a functional on the space of $\theta$-H\"{o}lder functions.
 This concludes the argument that $H_c$ is $C^\infty$ and completes the proof of Theorem \ref{localG}.


\section{Proofs of Corollary \ref{symplecticG}, Theorem \ref{local}, and Corollary \ref{symplectic}}  \label{Pcor}

\subsection{Proof of Corollary \ref{symplecticG}}

We will verify that $\w^c$ is sufficiently smooth, in fact that $\E^c$ is $C^\infty$.
The latter is equivalent to $\E^s\oplus \E^u$ being $C^\infty$ since $\E^c$ is the symplectic 
orthogonal to $\E^s\oplus \E^u$.
Indeed, if $u\in \E^c$ and $v \in \E^s$ then by invariance of the symplectic form $\omega$ 
we have that
$$ 
|\omega_x (v,u)|=|\omega_{f^nx} (D_xf^n(v),D_xf^n (u))| \le C \| D_xf^n(v)\| \cdot \|D_xf^n (u)\| \to 0
$$  
as $n \to \infty$, and so $\omega_x (v,u)=0$. Similarly $\omega_x (v,u)=0$
for any $u\in \E^c$ and $v \in \E^u$.

Now we show that $\E^s\oplus \E^u$ is $C^\infty$.
Since $f$ is topologically conjugate to $L$, the foliations $\w^{u}$ and $\w^{s}$ are topologically 
jointly integrable in the sense that there is a continuous foliation $\w=\w^{s+u}$ of dimension
$\dim \w^s + \dim \w^u$ which is sub-foliated by  $\w^{u}$ and $\w^{s}$.
First we note that the leaves of $\w^{s+u}$ are uniformly $C^\infty$ by the following lemma. 

\begin{lemma} \cite[Lemma 4.1]{KS06} \label{joint int}
Let $\w_1$ and $\w_2$ be foliations with uniformly $C^\infty$ leaves.
Suppose that $\w_1$ and $\w_2$ are topologically jointly integrable to a 
continuous foliation $\w$. Then $\w$ has uniformly $C^\infty$ leaves.
 \end{lemma}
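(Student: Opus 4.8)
The plan is to use the joint integrability hypothesis to reduce the assertion to Journ\'e's lemma. Since being $\Ci$ with uniform bounds is a local condition, it suffices to fix a point $x$ in the ambient compact manifold $\M$ and exhibit a neighborhood in which the plaque of $\w$ through $x$ is a $\Ci$ graph with $\Ci$ norm bounded independently of $x$; compactness of $\M$ then permits all intermediate choices to be made uniformly in $x$.

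First I would introduce adapted coordinates. Put $E_i=T\w_i$; these are continuous, transverse, and $E_1\oplus E_2$ is a continuous distribution. Let $k_i=\dim\w_i$, $k=k_1+k_2$, and let $n=\dim\M$. Choose, continuously and with uniform bounds in $x$, a $\Ci$ chart $(u,v,w)\in\R^{k_1}\times\R^{k_2}\times\R^{n-k}$ near $x$ in which $E_1(x)$, $E_2(x)$, and a fixed complement are the coordinate subspaces. Since $\w_1$ and $\w_2$ have uniformly $\Ci$ leaves, for $p$ near $x$ the plaque of $\w_1$ through $p$ is a $\Ci$ graph over the $u$--polydisk and the plaque of $\w_2$ through $p$ is a $\Ci$ graph over the $v$--polydisk, with leafwise derivatives continuous in $p$ and uniformly bounded and with nearly horizontal, resp.\ nearly vertical, tangent planes. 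Using that $\w$ is sub-foliated by $\w_1$ and $\w_2$, a standard local product structure argument (writing the plaque of $\w$ through $x$ as the union of the $\w_2$--plaques through the points of the $\w_1$--plaque of $x$, and solving for the projection onto the $(u,v)$--plane by a degree argument) shows that, after shrinking, the plaque of $\w$ through $x$ is a topological graph $\{(u,v,\tau(u,v)):(u,v)\in B^{k_1}\times B^{k_2}\}$ with $\tau$ continuous and $\tau(0,0)=0$. It remains to prove that $\tau$ is $\Ci$ with bounds uniform in $x$.

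The key observation is that $\tau$ is uniformly $\Ci$ along two transverse, complementary foliations of the polydisk $B^{k_1}\times B^{k_2}$. Indeed, read in the chart, the $\w_1$--plaques lying inside the plaque of $\w$ project to a continuous foliation $\mathcal F_1$ of $B^{k_1}\times B^{k_2}$ whose leaves are $\Ci$ graphs $v=v(u)$ coming from the uniformly $\Ci$ leaves of $\w_1$; along such a leaf $w=\tau(u,v(u))$ equals the $w$--component of a uniformly $\Ci$ parametrization of a $\w_1$--plaque, so $\tau$ is uniformly $\Ci$ along $\mathcal F_1$. Symmetrically, the $\w_2$--plaques give a continuous foliation $\mathcal F_2$ with uniformly $\Ci$ leaves $u=u(v)$, transverse to $\mathcal F_1$ and of complementary dimension, along which $\tau$ is again uniformly $\Ci$. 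Journ\'e's lemma \cite{J}, in the quantitative form for a pair of transverse foliations with uniformly $\Ci$ leaves whose tangent bundles span, then gives $\tau\in\Ci$ with $\Ci$ norm controlled by the leafwise norms; since those norms are uniform in $x$, the plaques of $\w$ are uniformly $\Ci$, which is the assertion.

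The step I expect to demand the most care is the uniformity bookkeeping rather than the appeal to Journ\'e's lemma: one must check that the local graph structure of the plaque of $\w$ genuinely follows from topological joint integrability together with the uniform transversality of $E_1$ and $E_2$, and, more importantly, that the $\Ci$ smoothness of $\tau$ along $\mathcal F_1$ and $\mathcal F_2$ holds with constants independent of $x$. The latter is where the hypothesis that $\w_1$ and $\w_2$ have \emph{uniformly} $\Ci$ leaves is used essentially: it is exactly what makes the $\Ci$ parametrizations of their plaques, and hence the leafwise $\Ci$ norms of $\tau$, bounded independently of the basepoint, so that Journ\'e's lemma returns a bound uniform over $\M$.
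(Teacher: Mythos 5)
The paper offers no proof of this lemma, only the citation to \cite{KS06}, and the argument given there is exactly the one you propose: represent a plaque of $\w$ locally as a graph over $E_1(x)\oplus E_2(x)$ and apply Journ\'e's lemma to the two transverse, complementary-dimensional subfoliations induced by $\w_1$ and $\w_2$, along which the graph function inherits uniform smoothness from the uniformly $C^\infty$ plaques. Your proposal is correct and follows the same route; the one point that genuinely needs the care you flag is that a degree argument only gives \emph{surjectivity} of the projection of the $\w$-plaque onto the $(u,v)$-polydisk, while the graph property requires \emph{injectivity}, which should be extracted from the local product structure $\bigcup_{q\in\w_1\text{-plaque}(x)}\w_2\text{-plaque}(q)$ together with the transversality of the nearly horizontal and nearly vertical plaques.
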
 
Now to prove that  $\w^{s+u}$, and hence $\E^s\oplus \E^u$, is $C^\infty$ it suffices
to show that the holonomies of $\w^{s+u}$ between leaves of $\w^c$ are $C^\infty$. 
By the dynamical coherence of $f$, the 
holonomy of $\w^{s+u}$ between the center leaves is smooth as a composition of 
holonomies of $\w^{u}$ inside $\w^{cu}$ and of $\w^{s}$ inside $\w^{cs}$. We claim
that the latter, and similarly the former, holonomies are $C^\infty$, since $\w^{s}$ is a
$C^\infty$ foliation inside the leaves of $\w^{cs}$. For this we note that, as we already observed
in the proof of Theorem \ref{localG}, $Df|_{\E^c}$ has sub-exponential growth, as the exponents 
of $f$ along $\E^c$ are all zero. This implies that $f$ is so called  {\em strongly r-bunched} for
any $r$ and thus the leaves of $\w^{cs}$ are $C^\infty$~\cite{PSW}.  
It also implies that $Df|_{\E^{cs}}$ has sub-exponential growth and so applying the $
C^r$ Section Theorem~\cite{HPS} as for example in
 \cite[Theorem 3.7 and Proposition 3.9]{KS07} we obtain that $\w^{s}$ is
$C^\infty$ along the leaves of $\w^{cs}$.


 \subsection{Proof of Theorem \ref{local}}
 
It is clear that smooth conjugacy implies (1)-(5). Then it suffices to show that all other items
imply (4) and that the topological conjugacy in (4) is bi-H\"older, so that Theorem \ref{localG}
applies and yields smoothness.
 
 The implication (1)$\implies$(2) is clear and the implication (2)$\implies$(3) follows 
 from the next result by Avila and Viana:
 \vskip.15cm
 \noindent \cite[Theorem 8.1]{AV}  
{\it Let $L$ be as in Theorem \ref{local}. Then there exists a neighborhood $U$ 
of $L$ in the space of $C^N$ volume preserving diffeomorphisms of $\T^d$ 
such that if  $f\in U$ is accessible then its center Lyapunov exponents are distinct.}
\vskip.15cm
The (topological) joint integrability of $\w^s\oplus \w^u$ implies that accessibility classes 
are the leaves of $\w^{s+u}$, thus (5)$\implies$(3).
The  implication (3)$\implies$(4) was established in \cite{RH}.
For a perturbation $f$ which is not accessible, it was proved in  \cite[Section 6]{RH} 
(cf. \cite[Remark 8.3]{AV}) that $f$ and $L$ are conjugate by a bi-H\"older  homeomorphism $h$. 
This completes the proof of Theorem \ref{local}.


\subsection{Proof of Corollary \ref{symplectic}}

Combining the above proof of Theorem \ref{local} with Corollary \ref{symplecticG}
we conclude that smooth conjugacy in this case is equivalent to (1)-(5). 
A smooth conjugacy also clearly implies (0) and (6). 

For a symplectic $f$ the Lyapunov spectrum is a symmetric subset of $\R$, that is, 
the Lyapunov exponents come in pairs $\lambda, -\lambda$.
Indeed, let $\omega$ be the invariant symplectic form. Since $\omega$ is non-degenerate, 
each Lyapunov space $\E^i$ is not symplectic orthogonal to at least one Lyapunov space 
$\E^j$. Then for suitable vectors $v_i$ and $v_j $  in these spaces we have by invariance that
 $$
0\ne \omega_x (v_i,v_j)=
\omega_{f^nx} (D_xf^n(v_i),\,D_xf^n (v_j)).$$
This implies $ \lambda_i^f+\lambda_j^f=0$ as otherwise the right hand side must go to 0 under 
forward or backward iterates.

Since $\E^c$ is symplectic orthogonal to $\E^s\oplus \E^u$, the argument above also shows that
the center exponents are of the form $\lambda, -\lambda$, and thus (0)$\implies$(1).
 
Finally, (6) implies (5) or (0) by a result of 
Hammerlindl \cite[Theorem 1.1]{H}: if $\E^s\oplus \E^u$ is $C^1$ and 
not integrable, then a center exponent must be qual
to the sum of a stable one and an unstable one. We let
$$
2\varepsilon=  \min_{\lambda_i\neq\pm\lambda_j}|\,|\lambda_i^L|-|\lambda_j^L|\,|>0.
$$
If $f$ is  sufficiently $C^1$ close to $L$ then the similar minimum for $f$ is at least $\varepsilon$
while the center exponents satisfy $|\lambda_c^f|<\varepsilon$. Then, by the symmetry of
Lyapunov spectrum, the equation $\lambda_c^f=\lambda_i^f+\lambda_j^f$
can only hold in the case $\lambda_i+\lambda_j=0$, yielding  (0).

This completes the proof of Corollary \ref{symplectic}.


\section{Proof of Theorem \ref{localL}} \label{PlocalL}

\subsection{Outline of the proof}

The main part of the proof is establishing smoothness of the leaf conjugacy transversely 
to the center foliation.  This is similar in spirit to proving smoothness of the conjugacy
along the stable and unstable foliations in Section~\ref{hsmoothWs}. However, in absence
of a true conjugacy, the argument with holonomies and 
normal forms  becomes more difficult. 

  As before, we denote the stable, unstable, and center sub-bundles for $L$
 by $E^s$, $E^u$, $E^c$, and the ones for $f$ by $\E^s$, $\E^u$, $\E^c$.
 Similarly, we use $W$ and $\w$ for the corresponding foliations for $L$ 
 and  for $f$.
 
 We recall that there exists a leaf conjugacy $h$,
that is, a homeomorphism close to the identity which maps center leaves to center leaves 
and conjugates $f$ to $L$ modulo the center foliation~\cite{HPS}. Further, it maps center-stable leaves to center-stables leaves and center-unstable leaves to center-unstable leaves. Such a leaf conjugacy can always be chosen to be smooth along the center foliation $\w^c$. In fact, this is true in general, for partially hyperbolic diffeomorphisms with $C^1$ center foliation. We can make a specific choice of $h$ as follows. Denote by $\bar h\colon\R^d\to\R^d$ the lift of $h$ that we would like to define. On the universal cover we have a direct splitting $\R^d=E^s\oplus E^c\oplus E^u$ and hence we can use $(s,c,u)$-coordinates
$$
\bar h(x)=\bar h(x_s, x_c, x_u)=(\bar h_s(x), \bar h_c(x), \bar h_u(x)).
$$
In the notations $\bar h(x)=x+\bar H(x)$ of Section~\ref{Gsmooth} this corresponds to $\bar h_*(x)=x_*+H_*(x)$.
Then $\bar h_s$ and $\bar h_u$ are uniquely determined by \eqref{Hu} and \eqref{Hs}. We take $\bar h_c(x)=x_c$, which corresponds to setting $\bar H^c=0$. Then $\bar  h_c$ is obviously smooth. Because $\bar h$ sends center leaves to center leaves, if $x$ varies in $\w^c(x_0)$ then $\bar h(x)$ varies in $W^c(\bar h(x_0))$ and, hence, the coordinates $\bar h_s$ and $\bar h_u$ do not change. In the same way, if $x$ varies in $\w^s(x_0)$ then $\bar h(x)$ varies in $W^{cs}(\bar h(x_0))$ and, hence, the coordinate $\bar h_u$ does not change. And when $x$ varies in $\w^u(x_0)$ the coordinate $\bar h_s$ does not change.
Hence, to prove that $h$ is $C^\infty$ it suffices to show that $\bar h_s$ is uniformly $C^\infty$ along $\w^s$. 
This is done in Section \ref{hononomyL} below. Similarly, $\bar h_u$ is uniformly $C^\infty$ along $\w^u$, 
which completes the proof.


\subsection{Smoothness of $h_s$ along $\w^s$}  \label{hononomyL}
In this section we give modifications needed to carry out the arguments from Section \ref{hsmoothWs}
 in the case of leaf conjugacy. The main part is to establish the following generalization of Proposition \ref{normal holonomy}.
  
  \begin{proposition}  \label{normal holonomy L} 
 Let $L: \T^d\to\T^d$ be a partially hyperbolic automorphism which is diagonalizable over $\C$.
  Let $f\colon\T^d\to \T^d$ be a sufficiently $C^1$-small perturbation of $L$.
  Let  $\{\h_x\}_{x\in \T^d}$ be normal form coordinates for $f$ on $\w^s$, as in Theorem~\ref{MainNF}.
 For any $x\in \T^d$ and $y\in \w^c(x)$ the center holonomy $\H_{x,y}:\w^{s}(x) \to \w^{s}(y)$ 
 preserves normal forms, that is, the map
 $$
  \h_y \circ  \H_{x,y} \circ  \h_x^{-1} :\E_{x}^s \to \E_{y}^s \;\text{ is in  }\,\p _{L_s},
  $$ 
  the group of sub-resonance generated polynomial map 
 defined by $L_s=L|_{E^s}$.
 \end{proposition}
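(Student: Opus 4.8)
The plan is to reduce the general case to the setting already handled in Proposition~\ref{normal holonomy}, where the displacement $h(x)-h(y)$ lies in a single $L$-invariant two-dimensional rotation subspace $V$. Since $L$ is diagonalizable over $\C$, the center subspace $E^c$ decomposes as a direct sum of $L$-invariant subspaces: the one-dimensional eigenspaces for eigenvalues $\pm 1$, and the two-dimensional subspaces $V_1,\dots,V_m$ corresponding to pairs of complex conjugate eigenvalues of modulus one. Correspondingly, a translation $H_v$ by $v\in E^c$ decomposes as a composition of translations by the components $v=v_0+v_1+\dots+v_m$ in these pieces. For each two-dimensional piece $V_j$, Proposition~\ref{normal holonomy} (or rather its proof) shows that the center holonomy conjugate to the translation $H_{v_j}$ preserves normal forms; the remaining work is to handle the $\pm 1$-eigenspaces and to assemble these into a statement about an arbitrary center holonomy $\H_{x,y}$.

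First I would treat the eigenvalue-$1$ directions. If $v$ lies in the $1$-eigenspace of $L$ inside $E^c$, then the translation $H_v$ on $\T^d$ literally commutes with $L$, so its restriction to a stable leaf is a center holonomy for $L$ that commutes with $L$. Hence $g=h^{-1}\circ H_v\circ h$ is a center holonomy for $f$ commuting with $f$ and $C^r$ along $\w^c$, so part (4) of Theorem~\ref{MainNF} applies directly (without needing the suspension trick) to give that $g$ preserves normal forms on $\w^s$. For eigenvalue $-1$, the translation $H_v$ satisfies $L(H_v(x))=L(x)-v=H_{-v}(L(x))$, so $H_v$ commutes with $L^2$; passing to $f^2$ (which is still a small perturbation of $L^2$ and has the same stable foliation and the same normal form coordinates, since $\p_{L_s}=\p_{L^2_s}$), part (4) of Theorem~\ref{MainNF} again applies. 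For the genuinely elliptic two-dimensional pieces $V_j$ I would run verbatim the suspension-flow argument of Proposition~\ref{normal holonomy}: pass to the mapping torus, build the map $T_{v_j}$ interpolating the rotation, observe it commutes with the time-$1/\theta_j$ map, and invoke Theorem~\ref{MainNF}(4) there.

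Next I would argue that any center holonomy $\H_{x,y}$ for $f$ with $y\in\w^c(x)$ is a $C^0$-limit of compositions of the special holonomies treated above. Because $f$ is a leaf conjugacy image of $L$, the center holonomy $\H_{x,y}$ depends only on which center leaf of $L$ the point $h(y)$ lies on relative to $h(x)$, i.e.\ on the class of the displacement $w=h(y)-h(x)$ modulo the lattice projected appropriately; and it is the $h$-conjugate of the linear translation $H_w$ on $W^s(h(x))$. Since $L$ is diagonalizable over $\C$ and the center foliation need not be dense in each invariant piece, I would instead argue directly: the linear holonomy $H_w$ decomposes as a composition $H_{w_0}\circ H_{w_1}\circ\cdots\circ H_{w_m}$ over the invariant pieces of $E^c$, each factor is a holonomy of the type handled above (allowing that the $w_j$ need not come from closed loops — but the linear translations along $E^c$ are all genuine center holonomies for $L$ on stable leaves regardless), and the $h$-conjugate of a composition is the composition of the $h$-conjugates. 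Each $h$-conjugate factor preserves normal forms on $\w^s$, hence so does their composition $\H_{x,y}$, and since $\p_{L_s}$ is closed under composition we are done.

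The main obstacle I anticipate is the bookkeeping in the decomposition step: making precise that the nonlinear center holonomy $\H_{x,y}$ really factors as a composition of the special holonomies, given that $h$ is only a leaf conjugacy (not a true conjugacy) and that the intermediate ``points'' one would factor through live on different center leaves whose identification requires care. In the true-conjugacy case of Proposition~\ref{normal holonomy} one could take limits using density of a single invariant plane foliation; here the cleanest route is to avoid limits where possible and instead use the exact factorization of the linear translation $H_w$ over the $L$-invariant splitting of $E^c$, conjugating each factor by $h$ and invoking the appropriate version of Theorem~\ref{MainNF}(4) (directly for eigenvalue $1$, via $f^2$ for eigenvalue $-1$, via the suspension for the rotation blocks). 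A secondary technical point is checking that passing to $f^2$ or to the suspension does not change the stable foliation or the relevant group of sub-resonance polynomials, which is immediate since $E^s$, the contraction rates' ratios, and hence $\p_{L_s}$ are unaffected.
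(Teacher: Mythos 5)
Your overall architecture --- decompose $E^c$ into the $\pm1$-eigenspaces and the two-dimensional rotation blocks $V_j$, handle each block by a commutation argument (directly for eigenvalue $1$, via $f^2$ for $-1$, via the suspension and the time-$1/\theta_j$ map for the elliptic blocks), and then obtain an arbitrary holonomy $\H_{x,y}$ by composing the factors of $H_w$, $w=h(y)-h(x)$ --- is exactly the paper's. The composition step, which you single out as the main obstacle, is in fact unproblematic and is dispatched in one paragraph in the paper.

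The genuine gap is earlier, in the step you treat as routine. Since $h$ is only a leaf conjugacy, the map $g=h^{-1}\circ H_v\circ h$ commutes with $\f=h^{-1}\circ L\circ h$, \emph{not} with $f$: these two differ by a shift along $\w^c$, so the commutation hypothesis of Theorem~\ref{MainNF}(4) fails for the pair $(g,f)$ even in the eigenvalue-$1$ case. Worse, $g$ does not preserve $\w^s$ at all --- the chosen $h$ maps $\w^c$ and $\w^{cs}$ to $W^c$ and $W^{cs}$ but not $\w^s$ to $W^s$, so $g$ carries $\w^s(x)$ into $\w^{cs}(x)$ rather than into $\w^s(g(x))$ --- which kills the other hypothesis of Theorem~\ref{MainNF}(4); the same defect makes ``running Proposition~\ref{normal holonomy} verbatim'' impossible, since that proof used the true conjugacy precisely to know that $g$ is a holonomy between stable leaves commuting with $f^{1/\theta}$. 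The paper's resolution is to replace $f^t$ and $g$ by genuine smooth extensions on $\tilde\w^s$ via the global center-holonomy projection $\P^{cs}$ inside $\tilde\w^{cs}$, namely $F^t_x=\P_{\f^t(x)}\circ\f^t|_{\tilde\w^s(x)}$ and $G_x=\P_{g(x)}\circ g|_{\tilde\w^s(x)}=\H_{x,g(x)}$; these do commute, one checks $D_0\bar F^1$ is $C^0$-close to $L_s$, applies the extension version Theorem~\ref{NFext} (not Theorem~\ref{MainNF}(4)) to build normal form coordinates for $F^1$ in which $G$ is sub-resonance, and only at the end identifies these with normal form coordinates for $f$ itself via the relation $F^1_x=\H_{f(x),\f(x)}\circ f|_{\w^s(x)}$. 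Without this construction your proof does not go through.
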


\begin{proof}
As in the proof of  Proposition \ref{normal holonomy}, we consider the mapping tori
and the corresponding suspension flows $f^t$ and $L^t$. Then the leaf conjugacy $h$, 
which was chosen in the previous subsection, induces 
the leaf conjugacy $\th\colon M_f\to M_L$  between  the suspension flows given by $\th (x,t)=(h(x), t)$.

We recall that $L$ is diagonalizable and all its eigenvalues on $E^c$ have modulus 1. 
Hence we can decompose $E^c=\oplus V_j$ as the direct sum of eigenspaces corresponding
to eigenvalues $1$ and $-1$ and of $L$-invariant subspaces corresponding to pairs of complex 
eigenvalues $e^{\pm 2\pi i \theta_j}$. We will consider center holonomies corresponding to each
of these subspaces separately. We fix one of the subspaces corresponding to a complex pair
and write $V=V_j$ and $\theta=\theta_j$. The case when $V$ is an eigenspace of $1$ or $-1$ can be considered similarly using $L^2$ in place of $L^{1/\theta}$.

For any $v\in V$ we again consider the translation $H_v (x) =x+v$, $x\in\T^d$, which 
embeds as $t=0$\, level of  the map $\tilde H_v: M_L\to M_L$. The map
$\tilde H_v$ commutes with $L^{1/\theta}$, the time $1/\theta$ map  of the suspension flow. 
Since $h$ is not a conjugacy, we will first consider the normal forms for a different dynamics on $M_f$. 
Denoting 
$$
\f^t= \tilde h^{-1} \circ L^{t}\circ \tilde h \quad\text{for }t\in \R,
$$
 we obtain a continuous 
flow on $M_f$. We fix $v\in E^c$ and define  the homeomorphism 
$$
g=g_v=\tilde h^{-1} \circ \tilde H_v\circ \tilde h
$$  
 which again  commutes with $\f^{1/\theta}$.  
However, $\f^t$  and $g$ may not preserve the foliation $\tilde \w^s$.
Since $h$ is a leaf conjugacy, the homeomorphisms $\f^t$  and $g$ preserve foliations 
$\tilde \w^c$ and $\tilde \w^{cs}$, and they differ from $f^{t}$ and from 
a center holonomy between strong leaves respectively,  by ``adjusting along the center".
More precisely, 
$$
\f^t(x) \in \tilde \w^c (f^{t}(x))\quad\text{and}\quad g(x)\in \tilde \w^c (x).
$$

Now we define smooth extensions $F^t$ and $G$ of $\f^t$ and $g$. 
They reflect the behavior of $f^{t}$ and of the center holonomies between the 
corresponding strong stable leaves. We fix $x\in M_f$ and for each $t \in \R$ we define 
$$
F^{t}_x: \tilde \w^s(x) \to \tilde \w^s(\f(x))\;\text{ as }\;
F^{t}_x = \P_{\f(x)} \circ \f^{t} |_{\tilde \w^s(x)}, 
$$
where $\P_{\f(x)}$ is 
``holonomy projection" along  $\tilde \w^{c}$ inside the leaf of $\tilde \w^{cs}$, that is 
 $$
 \P_{x} =\P^{cs}_{x} :\tilde \w^{cs}(x) \to \tilde \w^{s}(x) \quad\text{given by}\quad
  \P_{x}(z)= \tilde \w^c(z)\cap  \tilde \w^{s}(x).
  $$
Note that $\P_{x}$ is globally defined on $\tilde \w^{cs}$ since the leaf conjugacy
$h$ maps the leaves of $\tilde \w^{c}$ and $\tilde \w^{cs}$ to those of $\tilde W^{c}$ 
and $\tilde W^{cs}$. Also, since $\f^{t}(y)$ and $f^{t}(y)$ are on the same leaf of $\tilde \w^c$,
we can also express $F_x^t$ as 
\begin{equation} \label{F^t}
 F^{t}_x = \H_{f^{t}(x),\f^t(x)} \circ f^{t} |_{\tilde \w^s(x)}: \tilde  \w^{s}(x) \to \tilde \w^{s}(\f(x)),
\end{equation}
 where $\H_{x,y}: \tilde \w^s(x) \to \tilde \w^s(y)$ denotes the usual $\tilde \w^c$ holonomy.
Similarly, for any $x\in M_f$, we define 
$$
G_x: \tilde \w^s(x) \to \tilde \w^s(g(x))\quad\text{as} \quad
G_x = \P_{g(x)} \circ g|_{\tilde \w^s(x)}.
$$
Since $\f^{1/\theta}$ and $g$ commute, it is clear from 
 the definitions that the extensions also commute: 
 $G_{\f (x)} \circ F^{1/\theta}_x=  F_{g(x)}^{1/\theta} \circ G_x$. 
 Again, as  $g(y)\in \tilde \w^c (y)$ we see that $G_x$ coincides with the center holonomy 
 \begin{equation} \label{G}
 G_x=\H_{x,g(x)} :\tilde \w^{s}(x) \to \tilde \w^{s}(g(x)).
\end{equation}
 Since $\tilde \w^c$ is a $C^\infty$ foliation, $\P_{\f(x)}$ and $\H_{x,g(x)}$ are $C^\infty$, and 
 thus both $F_x^t$ and $G_x$ are $C^\infty$ diffeomorphisms.
 
  Now we construct normal forms for the extension $F^t$ and show that $G$ preserves them.
 To apply Theorem \ref{NFext}, we locally identify $\tilde \w^{s}(x)$ and $\tilde \E^s_x$ and
 obtain the corresponding smooth  extensions $\bar F^t$ and $\bar G$ of
 $\f^t$ and $g$, respectively, defined on a neighborhood of the zero section in $\E=\tilde \E^s$.
We claim that the derivative of $\bar F^t$ at the zero section is a contraction which is close to
the linear flow $L^t$, provided that $f$ is sufficiently $C^1$ close to $L$.  Indeed,
differentiating \eqref{F^t} at $x$ we obtain 
$$
D_0 \bar F^t_x=D_x F^t_x = D_{f^{t}(x)} \H_{f^{t}(x),\f^t(x)} \circ Df^{t} |_{\tilde \E^s(x)}.
$$
If $f$ is sufficiently $C^1$ close to $L$ then $h$ is $C^0$ close to the identity, and hence
$\f^t$ is $C^0$ close to $f^{t}$. Thus $\f^t (x)$ is close to $f^{t}(x)$ 
and hence the derivative of the holonomy $\H_{f^{t}(x),\f^t(x)} $
is close to the identity. Thus $D_0 \bar F^t_x$ is close to $D f^{t} |_{\tilde \E^s(x)}$, 
which is close to $L^t$.
In particular, $D_0 \bar F^1$  is close to $L$ and, as $\bar F^t$ is $C^\infty$, we can now 
apply Theorem \ref{NFext} with $F=F^1$ and $A=L_s=L |_{E^s}$ 
to obtain a family of local normal form coordinates 
$\bar \h_x$ for $\bar F^1$ on $\tilde \E^s$. 

Since all maps $\bar F^t$ commute, the second part of 
Theorem \ref{NFext} implies that $\bar \h_x$ are also normal form coordinates for the whole 
one-parameter group $\{\bar F^t\}$. 
Hence by the identification we obtain local normal form coordinates $\h_x$ for $F^t$ on $\tilde \w^s$. 
Then we can extend $\h_x$, as in the Remark \ref{NFglob}, to get global normal form coordinates 
on the whole leaves $\h_x :  \tilde \w_{x}^s \to \tilde \E_{x}^s$.  
Indeed, while $F^t_x$ may not be a global contraction, 
for any bounded set $B\subset  \tilde \w_{x}^s$, the set $F^n_x(B)$ will be in a small neighborhood 
of $\f^n(x)$ for all sufficiently large $n$, and hence we can define $\h_x $ on $B$  by
$$
\h_x  = (\p_x^n)^{-1} \circ  \h_{\f ^n (x)} \circ F_x^n.
$$

Since the extension $G$ is also $C^\infty$ and commutes with $ F^{1/\theta}$, the second part 
of Theorem \ref{NFext} implies that $G$ preserves the normal form coordinates
 for $F^t$ on $\tilde \w^s$, i.e., $\h_{g(x)} \circ G_x \circ \h_x^{-1} \in \p_{L_s}$,
  the sub-resonance group given by $A={L_s}$.
By \eqref{G}, $G_x$ is the holonomy $\H_{x,g(x)} :\tilde \w^{s}(x) \to \tilde \w^{s}(g(x))$ and
we conclude that 
$$
\h_{g(x)} \circ \H_{x,g(x)} \circ \h_x^{-1} \in \p_{L_s}.
$$

Recall that $E^c=\oplus V_j$. The above conclusion holds for $g_v=h^{-1}\circ \tilde H_v\circ h$, 
where $v$ is any vector in any $V_j$. We decompose any vector $w\in E^c$ as the sum 
$w=\sum v_j$ and note that the holonomy $\tilde H_w$ is the composition of the holonomies
$\tilde H_{v_j}$. Therefore, $g_w=h^{-1}\circ \tilde H_w\circ h$ preserves normal forms
as the corresponding composition of the maps $g_{v_j}$. Since for any
$x \in M_f$ and any $y \in \w^c(x)$ we can take $w=h(y)-h(x)$ so that $g_w(x)=y$, we conclude that any 
center holonomy map $\H_{x,y} :\tilde \w^{s}(x) \to  \tilde  \w^{s}(y)$ preserves normal forms.

Considering $t=0$ level of the suspension $M_f$ we obtain this result for $\T^d$:
for any $x \in \T^d$ and any $y \in \w^c(x)$
$$ 
\h_y \circ  \H_{x,y} \circ  \h_x^{-1} :\E_{x}^s \to \E_{y}^s \quad \text{is in} \; \p_{L_s}.
$$ 
 
Finally, we note that by \eqref{F^t} we have $ F^1_x = \H_{f(x),\f(x)} \circ f |_{\w^s(x)}$.
Since both $F^1_x$ and the holonomy are in $\p_{L_s}$, we conclude that so is $ f|_{\w^s(x)}$.
Therefore,  $\{\h_x\}_{x\in \T^d}$ are normal form coordinates for $f$ on $\w^s$,
as in Theorem~\ref{MainNF}. This completes the proof of Proposition \ref{normal holonomy L}. 
\end{proof}

Now we show that $h_s$ is uniformly $C^\infty$ along $\w^s$.
We fix a point $x\in \T^d$ and consider the map 
$$
\hat h_x :\w^s(x) \to W^s(h(x))\quad\text{given by}\quad 
\hat h_x = H^c_{h(x)} \circ h|_{\w^s(x)},
$$
 where $H^c_{h(x)}$ is the linear projection inside
$W^{cs}(h(x))$ to $W^s(h(x))$ along $W^c$. We will prove that $\hat h_x$ is uniformly $C^\infty$.
This will show that the component $h_s$ is uniformly $C^\infty$ along the leaves of  $\w^s$, 
as it is easy to see that $\hat h_x = h_s |_{\w^s(x)}$ under the natural identification of $W^s(h(x))$ with $E^c$.

We fix $y\in \w^{s}(x)$ and take a sequence of points $y_n\in \w^c(x)$ converging to $y$.
This can be done since the leaves of the linear foliation $W^c$ are dense in $\T^d$ and 
the fact that the leaf conjugacy $h$ is a homeomorphism which sends $\w^c$ to $W^c$.
We consider holonomies $\H_{x,y_n}: \w^s(x) \to \w^s(y_n)$ of $\w^{c}$ inside $\w^{cs}$. 
We claim that the holonomy maps $\H_{x,y_n}$ converge in $C^0$ to the map 
$\H_{x,y}: \w^s(x) \to \w^s(y)$,  which is conjugate by $\hat h_x$ to linear translation 
$H_{\hat v}$ in $W^s(h(x))$ by the vector 
$$
\hat v=\hat h_x(y)-\hat h_x (x)=\hat h_x(y)- h (x).
$$ 
Indeed, since $y_n\in \w^c(x)$ converge to $y$,  $h(y_n) \in \w^c (h(x))$ converge to $h(y)$.
The corresponding linear center holonomies $H_{h(x),\,h(y_n)}$ for $L$ are translations
$H_{v_n}$ by the vectors $v_n=h(y_n)-h(x)$ and thus converge in $C^0$ to the 
translation $H_v : W^s(h(x)) \to W^s(h(y))$ by the vector $v=h(y)-h(x)$. Composing with
the translation $H_{\hat v -v}$, which is also a linear center holonomy, we see that
$$
H_{\hat v -v} \circ H_{h(x),\,h(y_n)} \;\text{ converges to }\;
H_{\hat v}: W^s(h(x)) \to W^s(h(x))=W^s(\hat h_x(y)),
$$ 
and that the map
$$
(\hat h_x)^{-1}\circ  H_{\hat v}\circ  \hat h_x : \w^s(x) \to \w^s(x)=\w^s(y)
$$ 
is the limit $\H_{x,y}$ of the holonomies $\H_{x,y_n}: \w^s(x) \to \w^s(y_n)$.
\vskip.1cm

Once we have this convergence of $\H_{x,y_n}$ to $\H_{x,y}$ and 
Proposition \ref{normal holonomy L}, we can use the same normal form argument 
as in Section \ref{hsmoothWs}. Indeed, we again obtain that 
$P_n =  \h_{y_n} \circ  \H_{x,y_n} \circ  \h_x^{-1}$
and their $C^0$ limit $P=  \h_{y} \circ  \H_{x,y} \circ  \h_x^{-1}$ are sub-resonance 
generated polynomials.  Identifying $\w^s(x)$ with $\E^s_x$ by $\h_x$ we obtain that
$P$ is in the Lie group $\bar \p _x$ generated by the translations of $\E^s_x$ and 
the sub-resonance generated polynomials. Then $\hat h$ defines the continuous 
homomorphism 
 $$
 \eta_x : E^s \to  \bar \p_x \quad\text{given by}\quad 
 \eta_x (\hat v)= (\hat h_x)^{-1}\circ  H_{\hat v}\circ  \hat h_x,
 $$ 
 which are $C^\infty$. 
 This yields  that $\hat h^{-1}_x$ and $\hat h_x$ are $C^\infty$ diffeomorphisms that
  depend continuously on $x$ in $C^\infty$ topology. 

   This shows that  $h_s$ is uniformly $C^\infty$ along $\w^s$ and completes
  the proof of Theorem~\ref{localL}.



\end{document}